\newcounter{RomanNumber}
\newcommand{\MyRoman}[1]{\setcounter{RomanNumber}{#1}\Roman{RomanNumber}}
\newtheorem{theorem}{Theorem}[section]
\newtheorem{lemma}[theorem]{Lemma}
\theoremstyle{definition}
\newtheorem{remark}[theorem]{Remark}
\numberwithin{equation}{section}
\newcommand{\adj}{\mathrm{adj\,}}
\def\geq{\geqslant}
\def\leq{\leqslant}
\numberwithin{equation}{section}
\begin{document}
\title{Optimal function spaces for the weak continuity of the distributional $k$-Hessian}
\author{Qiang Tu\and Wenyi Chen\and
School of Mathematics and Statistics, Wuhan University
Wuhan 430072, China\footnote{\emph{Email addresses}:~qiangtu@whu.edu.cn(Qiang Tu), wychencn@whu.edu.cn(Wenyi Chen).}.
}
\date{}   %不显示日期
\maketitle
\noindent{\bf Abstract:}  In this paper we introduce the notion of distributional $k$-Hessian associated with Besov type functions in Euclidean $n$-space, $k=2,\ldots,n$. Particularly, inspired by recent work of Baer and Jerison on distributional Hessian determinant, we show that the distributional $k$-Hessian is weak continuous on the Besov space $B(2-\frac{2}{k},k)$, and the result is optimal in the framework of the space $B(s,p)$, i.e., the distributional $k$-Hessian is well defined in $B(s,p)$ if and only if $B(s,p)\subset B_{loc}(2-\frac{2}{k},k)$.
\medskip

\noindent{\bf Key words:}  $k$-Hessian; Minor; Besov space;  Distribution.
\medskip

\noindent{\bf 2010 MR Subject Classification:} 46E35, 46F10, 42B35.
%\medskip

\section{Introduction and main results}
For $k=1,\ldots,n$ and $u\in C_{c}^2(\mathbb{R}^n)$, the $k$-Hessian operator $F_k$ is defined by
$$F_k[u]=S_k(\lambda(D^2 u)),$$
where $\lambda=(\lambda_1,\ldots,\lambda_n)$ denotes the eigenvalues of the Hessian matrix of second derivatives $D^2u$, and $S_k$ is the $k$-th elementary symmetric function on $\mathbb{R}^n$, given by
$$S_k(\lambda)=\sum_{i_1<\cdots<i_k}\lambda_{i_1}\cdots\lambda_{i_k}.$$
Alternatively we may write
$$F_k[u]=[D^2u]_k,$$
where $[A]_k$ denotes the sum of the $k\times k$-principal minors of an $n\times n$ matrix $A$, which may also be called the $k$-trace of $A$. It is well known that the $k$-Hessian is the Laplace operator when $k=1$ and the Monge-Amp\`{e}re  operator when $k=n$.

This paper is devoted to the study of the $k$-Hessian of a nonsmooth map $u$ from $\mathbb{R}^n$ into $\mathbb{R}$, with $2\leq k\leq n$. Starting with the seminal work of Trudinger and  Wang (see \cite{TW1,TW2,TW3,WANG}), it has been known that the $k$-Hessian makes sense as a Radon measure and enjoys the weak continuity property for $k$-admissible functions.
In \cite{Fu1,Fu2}, Fu introduced the space of Monge-Amp\`{e}re functions for which all minors of the Hessian matrices, including in particular the Hessian determinant, are well defined as signed Radon measures and weakly continuous in a certain natural sense. Jerrard \cite{J,J2} extended the notion of Monge-Amp\`{e}re functions and showed analogous continuous property and other structural properties.
Moreover other generalized notion of the $k$-Hessian measure are considered in \cite{AD,AP,DTW}.
Our purpose in this thesis is to extend the definition of the $F_k$ to corresponding classes of functions so that the $k$-Hessian $\mathcal{F}_k[u]$ is a distribution on $\mathbb{R}^n$.
%Now the goal of this thesis is to identify  when the $k$-Hessian $\mathcal{F}_k[u]$ makes sense as a distribution on $\mathbb{R}^n$.
In the case $k=2$, inspired by the results of \cite{IT} characterizing the Hessian determinant on the space $W^{1,2}(\mathbb{R}^2)$, the $2$-Hessian is well defined and continuous on $W^{1,2}(\mathbb{R}^n)$. More precisely, the $2$-Hessian $\mathcal{F}_2[u]$ is defined for all $u\in W^{1,2}(\mathbb{R}^n)$ by
\begin{equation}\label{def01}
\langle \mathcal{F}_2[u], \varphi\rangle:=\sum_{i=1}^n\sum_{j\neq i} \int_{\mathbb{R}^n} \partial_iu\partial_ju\partial_{i,j}\varphi-\frac{1}{2}\partial_iu\partial_iu\partial_{j,j}\varphi-
\frac{1}{2}\partial_ju\partial_ju\partial_{i,i}\varphi dx
\end{equation}
for any $\varphi\in C_c^{\infty}(\mathbb{R}^n)$, where $\partial_i:=\frac{\partial}{\partial x_i}$.
It is obvious to show the weak continuous results by H\"{o}lder inequality.

In the case $3\leq k \leq n$, we consider the $k$-Hessian operator on  a class of Besov spaces on $\mathbb{R}^n$, denote by $B(s,p)=B^{p,p}_s$. In particular, we will show that the $k$-Hessian $\mathcal{F}_k[u]$ is well defined and continuous from the Besov space $B(2-\frac{2}{k},k)$, into the space of distribution.
Moreover, the definition and continuity property is optimal in the framework of the space of $B(s,p)$: the $k$-Hessian operator is continuous on any  $B(s,p)$ satisfying $B(s,p)\subset B_{loc}(2-\frac{2}{k},k)$ and is not  continuous on any other space in the framework of Besov type space.

The initial motivation of our work is the following: Baer and Jerison \cite{BJ} showed that the Hessian determinant operator
$u \mapsto \det(D^2 u): C_c^2(\mathbb{R}^n)\rightarrow \mathcal{D}'(\mathbb{R}^n)$ admits a unique continuous extension, which they denote by $\mathcal{H}$, from the Besov space $B(2-\frac{2}{n},n)$ to the space of distributions
$\mathcal{D}'(\mathbb{R}^n)$,
and  the continuity property fails for any space in the framework of Besov space for which the inclusion $B(s,p) \subset B_{loc}(2-\frac{2}{n},n)$ dose not hold.

We recall that for $1<s<2$ and $1\leq p<\infty$, the Besov space $B(s,p)$ is defined by
$$B(s,p):=\left\{u\in W^{1,p}(\mathbb{R}^n)\mid \left(\int_{\mathbb{R}^n} \int_{\mathbb{R}^n} \frac{|Du(x)-Du(y)|^p}{|x-y|^{n+(s-1)p}} dxdy\right)^{\frac{1}{p}} <\infty\right\},$$
and the norm
$$\|u\|_{s,p}:=\|u\|_{W^{1,p}}+ \left(\int_{\mathbb{R}^n} \int_{\mathbb{R}^n} \frac{|Du(x)-Du(y)|^p}{|x-y|^{n+(s-1)p}} dxdy\right)^{\frac{1}{p}}.$$
Then our first result is the following.
\begin{theorem}\label{thm1}
For $2\leq k\leq n$,
the $k$-Hessian operator $u \mapsto F_k[u]: C_c^2(\mathbb{R}^n) \rightarrow \mathcal{D}'(\mathbb{R}^n)$ can be extended uniquely as a continuous mapping $u \mapsto \mathcal{F}_k[u]: B(2-\frac{2}{k},k)\rightarrow \mathcal{D}'(\mathbb{R}^n)$. Moreover, for all $u_1,u_2\in B(2-\frac{2}{k},k)$ and $\varphi \in C_c^2(\mathbb{R}^n)$, we have
$$|\langle \mathcal{F}_k[u_1]-\mathcal{F}_k[u_2], \varphi \rangle| \leq C \|u_1-u_2\|_{2-\frac{2}{k},k}\left(\|u_1\|^{k-1}_{2-\frac{2}{k},k}+\|u_2\|^{k-1}_{2-\frac{2}{k},k}\right) \|D^2\varphi\|_{L^{\infty}}.$$
\end{theorem}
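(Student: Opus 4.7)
The plan is to extend the Baer--Jerison framework for the Hessian determinant (the case $k=n$) to the general $k$-Hessian. For $u\in C_c^2(\mathbb{R}^n)$ I would first exploit the classical Reilly-type divergence structure
\[
k\,F_k[u] = \sum_{i,j}\partial_i\bigl(T_{k-1}^{ij}(D^2u)\,\partial_j u\bigr),
\]
where $T_{k-1}$ denotes the $(k-1)$-st Newton tensor, whose rows are divergence-free. Integrating by parts once pushes one derivative onto $\varphi$. Applying an analogous divergence identity a second time to $T_{k-1}^{ij}(D^2u)\,\partial_j u$ itself (as is made explicit for $k=2$ in \eqref{def01}) produces a representation
\[
\langle F_k[u],\varphi\rangle = \sum_{i,j}\int_{\mathbb{R}^n} P_k^{ij}(Du,D^2u)\,\partial_{ij}\varphi\,dx,
\]
where $P_k^{ij}$ is a polynomial in $k$ factors of $u$ of total derivative order $2k-2$, distributed so that precisely two of the $k$ factors carry a single derivative and the remaining $k-2$ carry two derivatives. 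Polarising $P_k^{ij}$ in the $k$ copies of $u$ produces a symmetric $k$-linear form $T_k(u_1,\ldots,u_k;\varphi)$ with $T_k(u,\ldots,u;\varphi)=\langle F_k[u],\varphi\rangle$.

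The core analytic step is then the multilinear Besov estimate
\[
|T_k(u_1,\ldots,u_k;\varphi)| \leq C\,\|D^2\varphi\|_{L^\infty}\prod_{i=1}^k \|u_i\|_{2-\frac{2}{k},k}.
\]
Because the integrand still contains $k-2$ second derivatives of arguments that are only Besov-regular, the strategy is to replace each such factor $\partial_{ab}u_i$ by an averaged difference quotient of $\partial_a u_i$ in the direction $e_b$, so that the whole integral is realised as an integral against a singular kernel in the variables $x,y$. A careful accounting of scaling shows the kernel weight produced is $|x-y|^{-(n+k-2)}$, matching exactly the Besov seminorm
\[
\left(\iint_{\mathbb{R}^n \times \mathbb{R}^n} \frac{|Du(x)-Du(y)|^k}{|x-y|^{n+k-2}}\,dx\,dy\right)^{1/k}
\]
after applying H\"older's inequality with $k$ factors in $L^k$ against an $L^\infty$ factor coming from $D^2\varphi$. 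It is precisely this scaling match that singles out $(s,p)=(2-\frac{2}{k},k)$ as critical; the two $Du_i$ factors carrying a single derivative contribute through the $W^{1,k}$ part of the norm.

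Once this multilinear estimate is established, the density of $C_c^2(\mathbb{R}^n)$ in $B(2-\frac{2}{k},k)$ yields a unique jointly continuous extension of $T_k$, and setting $\mathcal{F}_k[u]:=T_k(u,\ldots,u;\cdot)$ gives the required continuous extension of $F_k$. The Lipschitz-type bound follows from multilinearity via the standard telescoping identity
\[
\mathcal{F}_k[u_1]-\mathcal{F}_k[u_2]=\sum_{j=0}^{k-1}T_k\bigl(\underbrace{u_2,\ldots,u_2}_{j},\,u_1-u_2,\,\underbrace{u_1,\ldots,u_1}_{k-1-j};\,\cdot\bigr).
\]
The principal obstacle is the multilinear Besov estimate itself: even after two integrations by parts, $k-2$ genuine second derivatives of Besov functions remain in the integrand and cannot be bounded individually. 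The total product must be rewritten, using the algebraic structure inherited from the Newton-tensor identity, into a form controlled by the $k$-fold Besov seminorm. This is the direct analogue of the technical heart of Baer--Jerison and is where essentially all of the difficulty is concentrated.
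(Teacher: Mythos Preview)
Your proposal leaves the decisive step open. After two integrations by parts you arrive at an integrand with $k-2$ genuine second derivatives of Besov functions and two first derivatives; you then propose to ``replace each such factor $\partial_{ab}u_i$ by an averaged difference quotient'' and claim a scaling match with the kernel $|x-y|^{-(n+k-2)}$. But introducing $k-2$ difference quotients introduces $k-2$ auxiliary variables, not a single $y$, and you do not explain how the resulting multi-variable singular integral collapses to a product of $k$ Besov seminorms (each of which is a \emph{double} integral in $x,y$). There is no evident algebraic cancellation from the Newton tensor structure that would force the $k-2$ shift variables to align into a single direction, and a naive H\"older step in this multi-variable setting does not yield $\prod_i\|u_i\|_{2-\frac{2}{k},k}$. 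You yourself flag this as ``where essentially all of the difficulty is concentrated'', and indeed it is not actually carried out.

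The paper (and Baer--Jerison, whom you cite) bypass this difficulty entirely by a different device: Stein's trace/extension theorem identifies $B(2-\frac{2}{k},k)(\mathbb{R}^n)$ as the trace of $W^{2,k}(\mathbb{R}^n\times(0,1)^2)$ on the codimension-$2$ boundary. One extends $u$ to $U$ on $\mathbb{R}^n\times(0,1)^2$ with $\|D^2U\|_{L^k}\leq C\|u\|_{2-\frac{2}{k},k}$, and then rewrites $\int_{\mathbb{R}^n}M_\alpha^\alpha(D^2u)\varphi\,dx$ as an integral over $\mathbb{R}^n\times(0,1)^2$ that is a sum of terms of the form (cofactor of $D^2U$)$\,\cdot\,(\partial_{ij}\Phi)$, obtained by two successive integrations in the two new variables $x_{n+1},x_{n+2}$ combined with the divergence-free rows of the cofactor. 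In the lifted integral \emph{all} $k$ factors carry genuine second derivatives of a $W^{2,k}$ function, so a single straightforward H\"older with exponents $(k,\ldots,k,\infty)$ finishes the job. No fractional-difference argument is needed at all; the fractional smoothness is absorbed into the extension step.

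So the gap is not a minor technicality: your route requires proving a multilinear fractional estimate from scratch, whereas the paper's route replaces that by the classical trace characterisation of Besov spaces plus an algebraic identity (Lemma~\ref{lem21}) expressing the boundary minor through interior $k\times k$ minors of $D^2U$.
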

In the case $k=2$, the results of Theorem \ref{thm1} can be easily deduced by (\ref{def01}),
in which case the regularity index becomes integer and the Besov function space is the usual Sobolev space $W^{1,2}$.
In the case $k=n$, the $k$-Hessian operator  in fact is the  Hessian determinant operator, i.e. $\mathcal{F}_n=\mathcal{H}$, and the analogous  results were already established in \cite{BJ}.
Moreover, in analogy with \cite{BJ}, Theorem \ref{thm1} immediately gives several consequences: in particular, the $k$-Hessian as a distribution is continuous in spaces $W^{1,p}(\mathbb{R}^n)\cap W^{2,q}(\mathbb{R}^n)$ with $1<p,q<\infty$, $\frac{2}{p} +\frac{k-2}{q}=1$ and $k\geq 3$.

 Now we  turn to the optimality result. %  which  asserts that Theorem \ref{thm1} is optimal in the framework of the Besov spaces $B(s,p)$.
 More precisely, the distributional $k$-Hessian is well defined in $B(s,p)$ if and only if $B(s,p)\subset B_{loc}(2-\frac{2}{k}, k)$.
 \begin{theorem}\label{thm2}
 Let $3\leq k\leq n$, $1<p<\infty$ and $1<s<2$ be such that $B(s,p)\nsubseteq B_{loc}(2-\frac{2}{k},k)$. Then there exist a sequence
 $\{u_m\}\subset C_c^{\infty}(\mathbb{R}^n)$ and a function $\varphi \in C_c^{\infty}(\mathbb{R}^n)$ such that
 \begin{equation}\label{formulaorthm31}
 \lim_{m\rightarrow \infty} \|u_m\|_{s,p}=0
 \end{equation}
 and
  \begin{equation}\label{formulaorthm32}
  \lim_{m\rightarrow \infty} \int F_k[u_m] \varphi dx=\infty.
 \end{equation}
 \end{theorem}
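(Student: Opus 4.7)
My plan is to exploit a dichotomy encoded in the hypothesis. The condition $B(s,p)\nsubseteq B_{loc}(2-\tfrac{2}{k},k)$ corresponds, via standard local Besov–Sobolev embeddings, to the failure of at least one of the two inequalities $s\geq 2-\tfrac{2}{k}$ and $s-\tfrac{n}{p}\geq(2-\tfrac{2}{k})-\tfrac{n}{k}$. Accordingly I would split into a \emph{scaling regime} $s-\tfrac{n}{p}<(2-\tfrac{2}{k})-\tfrac{n}{k}$, handled by concentrating a single bump, and a complementary \emph{low-regularity regime} $s<2-\tfrac{2}{k}$ with $s-\tfrac{n}{p}\geq(2-\tfrac{2}{k})-\tfrac{n}{k}$ (which forces $p>k$), handled by superposing many disjoint translates. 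In both regimes I fix one bump $\varphi_0\in C_c^\infty(B_1(0))$ for which the moment $\kappa:=\tfrac{n-k+1}{k}\int S_{k-1}(D^2\varphi_0)\,\varphi_0\,dy$ is nonzero — taking $\varphi_0(x)=\eta(x)(1-\tfrac12|x|^2)$ with $\eta$ a cutoff equal to $1$ near the origin gives $D^2\varphi_0\equiv-I$ there, so $\kappa\neq 0$ — and a fixed test function $\varphi\in C_c^\infty(\mathbb{R}^n)$ with $D^2\varphi\equiv I$ on a ball large enough to contain all bump centers.

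For the scaling regime, I would set $u_m(x):=r_m^{\beta}\varphi_0\bigl(\tfrac{x-x_0}{r_m}\bigr)$ with $r_m\downarrow 0$. The Gagliardo seminorm dominates because $s>1$ and scales as $\|u_m\|_{s,p}\sim r_m^{\beta-s+n/p}\|\varphi_0\|_{s,p}$, while a change of variables gives $\int F_k[u_m]\varphi\,dx=r_m^{k\beta-2k+n}\int F_k[\varphi_0](y)\,\varphi(x_0+r_m y)\,dy$. Taylor expanding $\varphi$ at $x_0$ and using the two vanishing moments $\int F_k[\varphi_0]\,dy=0$ and $\int y_iF_k[\varphi_0]\,dy=0$, both consequences of the divergence structure $F_k[u]=\tfrac1k\partial_j(T_k^{ji}[u]\partial_iu)$ together with $\partial_jT_k^{ji}=0$, eliminates the zeroth- and first-order terms and yields $\int F_k[u_m]\varphi\,dx=\kappa\,r_m^{k\beta+n-2k+2}+O(r_m^{k\beta+n-2k+3})$. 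Any $\beta$ in the open interval $\bigl(s-\tfrac{n}{p},\,2-\tfrac{2}{k}-\tfrac{n}{k}\bigr)$, which is nonempty exactly under the scaling hypothesis, then produces simultaneously \eqref{formulaorthm31} and \eqref{formulaorthm32}. In the low-regularity regime I would instead set $u_m:=a_m\sum_{j=1}^{N_m}\varphi_0(\cdot-x_j^m)$ with $N_m\to\infty$ and centers $x_j^m$ in the ball where $D^2\varphi\equiv I$, pairwise separated by at least $3$. Disjointness of supports yields $\|u_m\|_{s,p}^p\sim N_m a_m^p\|\varphi_0\|_{s,p}^p$ (the Gagliardo cross terms decay like $|x_j^m-x_l^m|^{-n-(s-1)p}$ and are summable) and $\int F_k[u_m]\varphi\,dx\sim \kappa\,N_m a_m^k$; setting $a_m:=N_m^{-\alpha}$ with any $\alpha\in(1/p,1/k)$, an interval nonempty because $p>k$, again realizes \eqref{formulaorthm31} and \eqref{formulaorthm32}.

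The main obstacle is the twofold null-Lagrangian cancellation in the concentration argument: the vanishing moments improve the naive scaling $r_m^{k\beta+n-2k}$ to $r_m^{k\beta+n-2k+2}$, and this $r_m^2$ gain is exactly what matches the critical index $2-\tfrac{2}{k}-\tfrac{n}{k}$ from Theorem~\ref{thm1}; without tracking it one would land on the wrong exponent and fail to capture the sharpness. Verifying $\kappa\neq 0$ for a concrete $\varphi_0$ is the one step that really requires an explicit choice, and for the bump above it reduces to $S_{k-1}(-I)=(-1)^{k-1}\binom{n}{k-1}\neq 0$ after making $\eta$ concentrated enough that the core contribution dominates. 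The Gagliardo cross terms in the multi-bump construction are a secondary routine point, controlled by increasing the separation of the $x_j^m$.
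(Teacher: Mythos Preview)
Your scaling-regime argument is correct and essentially matches the paper's Section~3 (the paper uses a radial bump, but the idea, the two vanishing moments, and the exponent bookkeeping $\rho\in(s-\tfrac{n}{p},\,2-\tfrac{2}{k}-\tfrac{n}{k})$ coincide). The remaining two pieces, however, have genuine gaps.

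\textbf{The case split misses the borderline.} Your claimed characterization --- that $B(s,p)\nsubseteq B_{loc}(2-\tfrac{2}{k},k)$ is equivalent to $s<2-\tfrac{2}{k}$ or $s-\tfrac{n}{p}<2-\tfrac{2}{k}-\tfrac{n}{k}$ --- is false at the endpoint $p>k$, $s=2-\tfrac{2}{k}$: there neither of your strict inequalities holds, yet the embedding still fails (this is case~(iii)(b) of the Remark following the statement). Neither of your constructions reaches this endpoint. It is the hardest case; the paper treats it in Section~5 via a lacunary oscillatory sum
\[
u_m=\chi(x)\sum_{l=1}^m n_l^{-(2-2/k)}l^{-1/k}\prod_{i=1}^{k-1}\sin^2(n_l x_i)\prod_{j=k}^{n}x_j,\qquad n_l=m^{k^{3l}},
\]
and the blow-up $\gtrsim\log m$ comes from the diagonal terms $l_1=\cdots=l_k$ only after delicate cancellation of all off-diagonal frequency interactions.

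\textbf{The multi-bump construction is inconsistent.} You fix $\varphi\in C_c^\infty(\mathbb{R}^n)$ with $D^2\varphi\equiv I$ on a ball, and then place $N_m\to\infty$ unit-scale bump centers in that ball, pairwise separated by at least~$3$; a bounded ball admits only boundedly many such points. More decisively, since $\varphi$ has compact support, only a bounded number of the translates $\varphi_0(\cdot-x_j^m)$ can meet $\mbox{spt}\,\varphi$ at all, and the others contribute zero to $\int F_k[u_m]\varphi$. Hence $\bigl|\int F_k[u_m]\varphi\bigr|\leq C(\varphi)\,a_m^k\to 0$, the opposite of \eqref{formulaorthm32}. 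This also explains why your estimates in this regime never involve $s$: as written the argument would apply for every $s$ whenever $p>k$, contradicting Theorem~\ref{thm1}. The paper covers the open region $p>k$, $s<2-\tfrac{2}{k}$ (Section~4) with a single oscillatory profile $u_m=m^{-\rho}\chi(x)\prod_{i=1}^{k-1}\sin^2(mx_i)\prod_{j=k}^{n}x_j$; the high frequency $m$ plays the role your many bumps were meant to, while the support stays fixed inside $\mbox{spt}\,\varphi$.
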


 \begin{remark}
 We recall the embedding properties of the Besov  spaces $B(s,p)$ ($1<s<2, 1<p<\infty$) into the space $B_{loc}(2-\frac{2}{k},k)$,
 more details see \cite{ST3} or \cite[page 196]{TH}:
 \begin{enumerate}
\item[(\romannumeral1)]  $s+\frac{2}{k}>2+\max\{0,\frac{n}{p}-\frac{n}{k}\}$, the embedding $B(s,p)\subset B_{loc}(2-\frac{2}{k},k)$ holds;
\item [(\romannumeral2)]   $s+\frac{2}{k}<2+\max\{0,\frac{n}{p}-\frac{n}{k}\}$, the embedding fails;
\item [(\romannumeral3)] $s+\frac{2}{k}=2+\max\{0,\frac{n}{p}-\frac{n}{k}\}$, there are two sub-cases:
 \begin{enumerate}
 \item[(a)] if $p\leq k$, then the embedding $B(s,p)\subset B_{loc}(2-\frac{2}{k},k)$ holds;
 \item[(b)] if $p>k$, the embedding fails.
  \end{enumerate}
\end{enumerate}
 \end{remark}

 In order to prove Theorem \ref{thm2}, we just consider three case:
  \begin{enumerate}
\item[ \MyRoman{1}:] $1<p\leq k$ and $s+\frac{2}{k}<2+\frac{n}{p}-\frac{n}{k}$;
\item [ \MyRoman{2}:]   $k<p$ and $0<s<2-\frac{2}{k}$;
\item [ \MyRoman{3}:] $k<p$ and  $s=2-2/k$.
 \end{enumerate}

 This paper is organized as follows. Some notion about determinant and the proof of Theorem \ref{thm1} are given in Section 2. In Section 3 we show Theorem \ref{thm2} in the case \MyRoman{1}: $1<p\leq k$ and $s+\frac{2}{k}<2+\frac{n}{p}-\frac{n}{k}$. Then we prove Theorem \ref{thm2} in the case \MyRoman{2}: $k<p$ and $0<s<2-\frac{2}{k}$ in Section 4. Finally in Section 5 we establish Theorem \ref{thm2} in the remaining case: $k<p$ and  $s=2-2/k$.

\section{Preliminaries and the proof of Theorem \ref{thm1} }
In this section we prove the continuity results for the $k$-Hessian operator on spaces of Besov type into the space of distributions on $\mathbb{R}^n$. First we recall some notation and facts about determinant.

For integers  $n \geq 2$, we shall use the standard notation for ordered multi-indices
$$I(k,n):=\{\alpha=(\alpha_1,\ldots,\alpha_k) \mid \alpha_i  ~\mbox{integers}, 1\leq \alpha_1 <\cdots< \alpha_k\leq n\}.$$
 Set $I(0,n)=\{0\}$ and $|\alpha|=k$ if $\alpha \in I(k,n)$. If $\alpha\in I(k,n)$, $k=0,1,\ldots,n$,  $\overline{\alpha}$ is the element in $I(n-k,n)$ which complements $\alpha$  in $\{1,2,\ldots,n\}$ in the natural increasing order. So $\overline{0}=\{1,2,\ldots,n\}$.

Given $\alpha=(\alpha_1,\cdot\cdot\cdot,\alpha_k)\in I(k,n)$, we say $i\in \alpha$ if $i$ is one of the indexes $\alpha_1,\cdot\cdot\cdot,\alpha_k$.
 For $i \in \alpha$, $\alpha-i$ means the multi-index of length $k-1$ obtained by removing $i$ from $\alpha$. Similarly for $j\notin \alpha$, $\alpha+j$ means the multi-index of length $k+1$ obtained by reordering naturally the multi-index $(\alpha_1,\ldots,\alpha_k, j)$.

Let $A=(a_{ij})_{n \times n}$ and $B=(b_{ij})_{n \times n}$ be  $n \times n$ matrixes.
Given two ordered multi-indices with $\alpha,\beta\in I (k,n) $, then
$A_{\alpha}^{\beta}$ denotes
the $k \times k $-submatrix of $A$ with rows $(\alpha_1,\ldots,\alpha_k)$ and columns $(\beta_1,\ldots,\beta_k)$. Its determinant will be denoted by
$$M_{\alpha}^{\beta}(A):=\det A_{\alpha}^{\beta}.$$
We denote $\sigma(\alpha,\beta)$ by  the sign of the permutation which reorders $(\alpha,\beta)$ in the natural increasing order and $\sigma(\overline{0},0):=1$.
The adjoint of $A_{\alpha}^{\beta}$ is  defined  by the formula
$$(\adj A_{\alpha}^{\beta})_j^i:= \sigma(i,\beta-i) \sigma(j,\alpha-j) \det A_{\alpha-j}^{\beta-i}~~~~ i \in \beta, j\in \alpha.$$
So Laplace  formulas can be written as
$$M_{\alpha}^{\beta}(A)= \sum_{j \in \alpha} a_{ij} (\adj A_{\alpha}^{\beta})_j^i.$$
And the Binet formulas can be written as (see \cite[page 313]{GMS1})
\begin{equation}\label{binet}
M_{\alpha}^{\beta}(A+B)=\sum_{\alpha'+\alpha^{''}=\alpha;\beta'+\beta^{''}=\beta;|\alpha'|=|\beta'|}
\sigma(\alpha',\alpha^{''})\sigma(\beta',\beta^{''}) M_{\alpha'}^{\beta'}(A) M_{\alpha^{''}}^{\beta^{''}}(B).
\end{equation}
Let $n\geq 2$ and $F:\mathbb{R}^n \rightarrow \mathbb{R}$ be given as
$$F(x)= \prod_{i=1}^n f_i(x_i), ~~~~x=(x_1,\ldots,x_n)\in \mathbb{R}^n,$$
where the function $f_i: \mathbb{R}\rightarrow \mathbb{R}$ for $i=1,\ldots, n$.
 For any $\alpha \in I(k,n)$, it will be  convenient  to introduce the notation
$$F_{\alpha}(x_{\alpha}):= \prod_{i\in \alpha} f_i(x_i),~~~~x_{\alpha}:=(x_{\alpha_1},\ldots,x_{\alpha_k})\in \mathbb{R}^k,$$
$$F_{\overline{\alpha}}(x_{\overline{\alpha}}):= \prod_{i\in \overline{\alpha}} f_i(x_i) ,~~~~x_{\overline{\alpha}}:=(x_{\overline{\alpha}_1},\ldots,x_{\overline{\alpha}_{n-k}})\in \mathbb{R}^{n-k}.$$

We now turn to the proof of Theorem \ref{thm1}, which actually can be seen as an immediate consequence following from the standard approximation argument
if we have proven the following result.
\begin{theorem}\label{thm21}
Let $3 \leq k\leq n$. Then for all $u_1,u_2,\varphi \in C_c^2(\mathbb{R}^n)$, we have
\begin{equation}\label{formularorthm1}
\left|\int_{\mathbb{R}^n} (F_k[u_1]-F_k[u_2]) \varphi dx\right| \leq C \|u_1-u_2\|_{2-\frac{2}{k},k}\left(\|u_1\|^{k-1}_{2-\frac{2}{k},k}+\|u_2\|^{k-1}_{2-\frac{2}{k},k}\right) \|D^2\varphi\|_{L^{\infty}}.
\end{equation}
\end{theorem}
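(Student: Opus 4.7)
The plan is to prove the multilinear estimate (\ref{formularorthm1}) by adapting the strategy of \cite{BJ} for the Hessian determinant to the partial-minor setting of the $k$-Hessian. Since $F_k[u]=\sum_{\alpha\in I(k,n)}M_\alpha^\alpha(D^2u)$, it suffices to prove the corresponding inequality for each fixed $k\times k$ principal minor and sum over $\alpha\in I(k,n)$. The argument splits into three steps: a multilinear telescoping, a null-Lagrangian integration by parts that transfers exactly two derivatives onto $\varphi$, and a multilinear Besov estimate balanced so that each of the $k$ factors is controlled by $\|\cdot\|_{2-2/k,k}$.

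First, I would exploit the fact that $M_\alpha^\alpha(D^2 u)$ is multilinear in the rows of $D^2 u$. Using the Binet expansion (\ref{binet}) or a direct row-by-row telescoping with $w:=u_1-u_2$, I write
\begin{equation*}
M_\alpha^\alpha(D^2u_1)-M_\alpha^\alpha(D^2u_2)=\sum_{j=1}^{k}T_{\alpha,j}(v^{(1)},\dots,v^{(j-1)},w,v^{(j+1)},\dots,v^{(k)}),
\end{equation*}
where each $T_{\alpha,j}$ is the multilinear form obtained from the determinant by placing $D^2 w$ in the $j$-th row, and where each $v^{(i)}$ is either $u_1$ or $u_2$ according to a telescoping choice. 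This immediately isolates the factor $u_1-u_2$ and reduces (\ref{formularorthm1}) to estimating each $T_{\alpha,j}$ when integrated against $\varphi$.

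Second, I would perform two integrations by parts using the null-Lagrangian structure: the cofactor matrix of a Hessian is divergence-free, and the analogous statement for the adjugate $(\adj A_{\alpha}^{\beta})_j^i$ follows from the Laplace/Binet identities in Section~2. After the first integration by parts on one row of the determinant, I get a first-order divergence with $\partial_j u$ paired against a $(k-1)$-minor cofactor; a second integration by parts, applied to a row of this residual cofactor (again divergence-free in its free index), produces the target form
\begin{equation*}
\int T_{\alpha,j}\,\varphi\,dx=\sum_{\text{indices}}\pm\int \partial_p w\cdot\partial_q v^{(i)}\cdot M_{\alpha'}^{\beta'}(D^2 v^{(j_3)},\dots,D^2 v^{(j_k)})\,\partial_{rs}^2\varphi\,dx,
\end{equation*}
where the residual $(k-2)\times(k-2)$ minor involves the remaining $u$-factors and the two transferred derivatives hit $\varphi$.

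Third, I would estimate the right-hand side. Using the characterization $\|u\|_{2-2/k,k}^k\sim\|u\|_{W^{1,k}}^k+\int\!\!\int|Du(x)-Du(y)|^k/|x-y|^{n+k-2}\,dx\,dy$, the two first-derivative factors $\partial_p w,\partial_q v^{(i)}$ are controlled in $L^k$ via the embedding $B(2-2/k,k)\subset W^{1,k}$, and the $(k-2)$-minor is handled by expressing each $D^2 v^{(j_\ell)}$ as a finite-difference quotient of $Dv^{(j_\ell)}$ against a Hölder exponent $1-2/k$, so that after a final integration over the difference parameter each factor is bounded by the Besov seminorm of $v^{(j_\ell)}$. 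A H\"older inequality with exponent $k$ across the $k$ factors then produces the desired product $\|w\|_{2-2/k,k}\prod_\ell\|v^{(j_\ell)}\|_{2-2/k,k}\cdot\|D^2\varphi\|_{L^\infty}$, and summing over $j$, $\alpha$, and the telescoping choices yields (\ref{formularorthm1}).

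The main obstacle is step~two: for $k<n$, the adjugate identities and divergence-free cofactor cancellations must be carried out inside a fixed principal $k\times k$ submatrix rather than for the full Hessian, so one needs a version of the null-Lagrangian identity that respects the restriction $\alpha,\beta\in I(k,n)$. I expect this to follow from a careful bookkeeping using (\ref{binet}) after introducing an auxiliary expansion row, but it is technically delicate and will dominate the proof. A secondary difficulty is step~three: since $D^2 u\in B(-2/k,k)$ is only a negative-order distribution, the residual $(k-2)$-fold product of Hessians is not defined pointwise, so the finite-difference representation must be introduced \emph{before} passing to the limit in the mollification, and the bound must be derived uniformly in the mollification parameter.
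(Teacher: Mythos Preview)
Your steps 1 and 2 are reasonable and yield, for each $\alpha\in I(k,n)$, an expression of the schematic form
\[
\int_{\mathbb{R}^n} \partial_p w\;\partial_q v^{(2)}\;M_{\alpha'}^{\beta'}\bigl(D^2 v^{(3)},\ldots,D^2 v^{(k)}\bigr)\;\partial_{rs}\varphi\,dx,
\]
with two first-order factors and a $(k-2)$-fold product of second derivatives. The genuine gap is in step~3. For $v\in B(2-\tfrac2k,k)$ one has $Dv\in L^k$, but $D^2v$ lies only in the negative-order space $B(-\tfrac2k,k)$, which does not embed into any $L^q$. A H\"older inequality with exponent $k$ on the integrand above would require control of $\|D^2 v^{(j)}\|_{L^k}$, and this quantity is simply not dominated by $\|v^{(j)}\|_{2-2/k,k}$. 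Your proposed remedy of replacing each $D^2 v^{(j)}(x)$ by a difference quotient $\bigl(Dv^{(j)}(x+h_j)-Dv^{(j)}(x)\bigr)/|h_j|$ does not recover the original multilinear integral: the $(k-2)$ second-derivative factors are all evaluated at the \emph{same} point $x$, and there is no mechanism in the null-Lagrangian identity to introduce and then integrate out the auxiliary variables $h_3,\ldots,h_k$ against the Besov weight $|h_j|^{-n-(k-2)}$. The difficulty you flag as ``secondary'' is in fact the entire obstruction; it cannot be handled by mollification uniformity alone, because the needed inequality $\|D^2v\|_{L^k}\lesssim\|v\|_{2-2/k,k}$ is false for every choice of constants.

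The paper resolves this by a different mechanism: it never stays in $\mathbb{R}^n$ for the estimate. Lemma~\ref{lem21} rewrites $\int_{\mathbb{R}^n} M_\alpha^\alpha(D^2u)\varphi\,dx$ as an integral over $\mathbb{R}^n\times(0,1)^2$ of a $k$-linear expression in $D^2U$ against $D^2\Phi$, where $U,\Phi$ are extensions of $u,\varphi$. The two integrations by parts are performed in the two \emph{new} directions $x_{n+1},x_{n+2}$, not in $\mathbb{R}^n$. The point is the trace characterization: Stein's extension gives a bounded operator $E:B(2-\tfrac2k,k)(\mathbb{R}^n)\to W^{2,k}(\mathbb{R}^n\times[0,1)^2)$, so in the extended domain one genuinely has $D^2U\in L^k$, and the estimate reduces to the elementary bound $|M_\alpha^\beta(A)-M_\alpha^\beta(B)|\le C(|A|+|B|)^{k-1}|A-B|$ followed by H\"older with exponent $k$ in $L^k(\mathbb{R}^n\times(0,1)^2)$. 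The codimension-two extension is exactly what converts the fractional $2-\tfrac2k$ derivatives on $\mathbb{R}^n$ into two honest derivatives in $L^k$ on the larger space; this is the missing ingredient in your outline.
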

In order to prove the above theorem, we need the following  extension result which is inspired from the
work of Baer-Jerison \cite{BJ}.
\begin{lemma}\label{lem21}
Let $3\leq k\leq n$, $\alpha \in I(k,n)$ and $u, \varphi \in C_c^2(\mathbb{R}^n)$. Then
\begin{equation}
\int_{\mathbb{R}^n} M_{\alpha}^{\alpha} (D^2 u) \varphi dx=\sum_{i\in \alpha+(n+1)} \sum_{j\in \alpha+(n+2) } \int_{\mathbb{R}^n\times(0,1)^2} \adj\left((D^2 U)_{\alpha+(n+2)}^{\alpha+(n+1)}\right)_j^i     \partial_{i,j} \Phi d\widetilde{x}.
\end{equation}
for any extensions $U$ and $\Phi \in C^2_c(\mathbb{R}^n\times[0,1)\times[0,1))$ of $u$ and $\varphi$, respectively, here $\widetilde{x}=(x,x_{n+1},x_{n+2})$.
\end{lemma}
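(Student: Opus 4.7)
The plan is to derive the identity via two rounds of integration by parts in the extension variables $x_{n+1}$ and $x_{n+2}$, killing the bulk and intermediate-boundary contributions via Piola-type null-Lagrangian identities, and recovering the left-hand side from a single corner term at $\{x_{n+1} = x_{n+2} = 0\}$.

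The key algebraic input is the pair of null-Lagrangian identities for the $(k+1)\times(k+1)$ matrix $V := (D^2 U)_{\alpha+(n+2)}^{\alpha+(n+1)}$,
$$\sum_{j \in \alpha+(n+2)} \partial_j (\operatorname{adj} V)_j^i = 0 \quad \text{for each } i \in \alpha+(n+1),$$
$$\sum_{i \in \alpha+(n+1)} \partial_i (\operatorname{adj} V)_j^i = 0 \quad \text{for each } j \in \alpha+(n+2).$$
Both are instances of the classical Piola identity: the first comes from viewing $V$ as the Jacobian of the map $(x_p)_{p \in \alpha+(n+2)} \mapsto (\partial_q U)_{q \in \alpha+(n+1)}$, and the second from the analogous map with the roles of row- and column-indices swapped (so that $V^T$ plays the role of the Jacobian).

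With these in hand, I would start from the right-hand side and integrate $\partial_j \Phi$ by parts in $x_j$ for each $j \in \alpha+(n+2)$. For $j \in \alpha$ the integration runs over $\mathbb{R}^n$ with no boundary contribution, while for $j = n+2$ a boundary term on $\{x_{n+2}=0\}$ appears, since $\Phi$ has compact support in the open end of $[0,1)$. Summing in $j$ and invoking the first Piola identity annihilates the bulk, leaving $-\int_{\{x_{n+2}=0\}} (\operatorname{adj} V)_{n+2}^i\, \partial_i \Phi\, dx\, dx_{n+1}$ for each $i$. Integrating by parts again in $x_i$ on this face and using the second Piola identity on $\{x_{n+2}=0\}$ to kill the resulting bulk collapses everything to the corner $\{x_{n+1} = x_{n+2} = 0\} \cong \mathbb{R}^n$, where only the pair $(i,j)=(n+1,n+2)$ contributes. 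At the corner, $U|_{x_{n+1}=x_{n+2}=0} = u$ gives $(D^2 U)_\alpha^\alpha = (D^2 u)_\alpha^\alpha$, so $(\operatorname{adj} V)_{n+2}^{n+1} = \sigma(n+1,\alpha)\sigma(n+2,\alpha)\det(D^2 u)_\alpha^\alpha = M_\alpha^\alpha(D^2 u)$, the product of $\sigma$-factors being $(-1)^{2k} = 1$; this reproduces $\int_{\mathbb{R}^n} M_\alpha^\alpha(D^2 u)\varphi\, dx$.

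The main obstacle I anticipate is justifying the Piola identities in this mixed-row-column setting, since $V$ is not a principal minor but has distinct row set $\alpha\cup\{n+2\}$ and column set $\alpha\cup\{n+1\}$. The resolution is to observe that the entries $V_{pq} = \partial_p \partial_q U$ are literally the partial derivatives of $g_q = \partial_q U$ with respect to $x_p$, so $V$ is a genuine Jacobian of a map between $(k+1)$-dimensional coordinate subspaces, and the classical Piola identity applies directly (once, per side). Beyond this, the remaining work is a routine two-step integration-by-parts together with careful bookkeeping of the sign factors $\sigma$ from the adjugate formula, and a final check that the corner condition built into ``extension of $u$'' suffices to pin down the $\alpha$-indexed second derivatives.
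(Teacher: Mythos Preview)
Your approach is correct and is, at its core, the same mechanism the paper uses: two integrations by parts in the extension variables, with the bulk terms killed by divergence-free (Piola/null-Lagrangian) identities for cofactor rows of the Hessian. The paper, however, runs the argument in the opposite direction: it starts from the left-hand side, applies the fundamental theorem of calculus in $x_{n+1}$ to pass to $\mathbb{R}^n\times(0,1)$, expands $\partial_{n+1}M_\alpha^\alpha(D^2V)$ via Laplace's formula, and then uses the identity $\sum_{i\in\alpha}\partial_i\big((\adj(D^2V)_\alpha^\alpha)_j^i\big)=0$ explicitly. It then repeats the procedure in $x_{n+2}$, proving by a direct Laplace-expansion computation that $\partial_{n+2}M_\alpha^\beta(D^2U)=\sum_{j\in\alpha}\partial_j\big(\sigma(\alpha-j,j)\,M_{\alpha-j+(n+2)}^\beta(D^2U)\big)$. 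Your version packages both of these as the abstract Piola identities for the non-principal block $V=(D^2U)_{\alpha+(n+2)}^{\alpha+(n+1)}$ and works backward from the right-hand side to the corner term, which is tidier and avoids the sign-chasing of the explicit expansion. The paper's route, on the other hand, is entirely self-contained and makes the cancellations visible without appealing to the Piola identity as a black box. One small point: both arguments tacitly use third derivatives of $U$ (you when differentiating the cofactors, the paper when writing $\partial_{n+1}\partial_i\partial_j V$), so strictly speaking $C^2$ regularity is not enough; this is harmless for the intended application, where one approximates.
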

\begin{proof}
Denote $V:=U|_{x_{n+2}=0}$, $\Psi:=\Phi|_{x_{n+2}=0}$ and $\partial_i:=\frac{\partial}{\partial x_i}$. Then
\begin{align*}
\int_{\mathbb{R}^n} M_{\alpha}^{\alpha} (D^2 u) \varphi dx
&=-\int_{\mathbb{R}^{n}\times(0,1)}\partial_{n+1}\left(M_{\alpha}^{\alpha} (D^2 V) \Psi \right)dx dx_{n+1}\\
&=-\int_{\mathbb{R}^{n}\times(0,1)}\partial_{n+1}\left(M_{\alpha}^{\alpha} (D^2 V) \right)\Psi dx dx_{n+1}-\int_{\mathbb{R}^{n}\times(0,1)}M_{\alpha}^{\alpha} (D^2 V) \partial_{n+1}\Psi dx dx_{n+1}.
\end{align*}
We denote the first part integral on the right-hand side by  \MyRoman{1}, using Laplace  formulas we obtain
\begin{align*}
\MyRoman{1}&=-\sum_{i\in\alpha}\sum_{j\in \alpha} \int_{\mathbb{R}^{n}\times(0,1)} \sigma (i,\alpha-i) \sigma(j,\alpha-j) \partial_{n+1}\partial_i \partial_j V M_{\alpha-j}^{\alpha-i} (D^2 V) \Psi dx dx_{n+1}\\
&=\sum_{i\in\alpha} \sum_{j\in \alpha} \sigma (i,\alpha-i) \sigma(j,\alpha-j)\int_{\mathbb{R}^{n}\times(0,1)} \partial_{n+1} \partial_j V \left(\partial_i (M_{\alpha-j}^{\alpha-i} (D^2 V)) \Psi +  M_{\alpha-j}^{\alpha-i} (D^2 V) \partial_i\Psi\right) dx dx_{n+1}.
\end{align*}
Since
$$\sum_{i\in \alpha} \partial_i \left((\adj(D^2V))_{\alpha}^{\alpha}\right)_j^i=0$$
for any $j\in \alpha$, it follows that
\begin{align*}
\MyRoman{1}&=\sum_{i\in\alpha} \sum_{j\in \alpha} \int_{\mathbb{R}^{n}\times(0,1)} \sigma (i,\alpha-i) \sigma(j,\alpha-j) \partial_{n+1} \partial_j V M_{\alpha-j}^{\alpha-i} (D^2 V) \partial_i\Psi dx dx_{n+1}\\
&=\sum_{i\in\alpha}  \int_{\mathbb{R}^{n}\times(0,1)} \sigma (i,\alpha-i) \sigma(n+1,\alpha-i)  M_{\alpha}^{\alpha+(n+1)-i} (D^2 V) \partial_i\Psi dx dx_{n+1}\\
&=\sum_{i\in\alpha} -\sigma (\alpha+(n+1)-i,i) \int_{\mathbb{R}^{n}\times(0,1)} M_{\alpha}^{\alpha+(n+1)-i} (D^2 V) \partial_i\Psi dx dx_{n+1}.
\end{align*}
Hence
$$
\int_{\mathbb{R}^n} M_{\alpha}^{\alpha} (D^2 u) \varphi dx=\sum_{i\in \alpha+(n+1)} -\sigma(\alpha+(n+1)-i,i) \int_{\mathbb{R}^n\times(0,1)} \left(M_{\alpha}^{\alpha-i+(n+1)}(D^2 U)\partial_i \Phi\right) |_{x_{n+2}=0} dx dx_{n+1}.
$$
It is well known consequence of integration by parts that  the right-hand side of the above identity can be written as
\begin{equation}\label{formulathm102}
\sum_{i\in \alpha+(n+1)} \sigma(\alpha+(n+1)-i,i) A(i),
\end{equation}
where
\begin{align*}
A(i):&=\int_{\mathbb{R}^n\times (0,1)^2} \partial_{n+2} \left(M_{\alpha}^{\alpha-i+(n+1)}(D^2 U)\partial_i \Phi \right) d\widetilde{x}\\
&=\int_{\mathbb{R}^n\times (0,1)^2} \partial_{n+2} \left(M_{\alpha}^{\alpha-i+(n+1)}(D^2 U)\right)\partial_i \Phi d\widetilde{x}+\int_{\mathbb{R}^n\times (0,1)^2} M_{\alpha}^{\alpha-i+(n+1)}(D^2 U)\partial_{i,n+2} \Phi d\widetilde{x}.
\end{align*}
 For simplicity we may set $\beta:= \alpha-i+(n+1)$. Obviously,
\begin{equation}\label{formulathm103}
\partial_{n+2} \left(M_{\alpha}^{\beta}(D^2 U)\right)=\sum_{j\in\alpha}\sum_{t\in \beta} \sigma(j,\alpha-j) \sigma(t,\beta-t) \partial_{n+2} \partial_j \partial_t U M_{\alpha-j}^{\beta-t}(D^2 U),
\end{equation}
and for any $j\in \alpha$,
\begin{align*}
&\sum_{j\in \alpha}\partial_j \left( \sigma(\alpha-j,j) M_{\alpha-j+(n+2)}^{\beta} (D^2 U)\right)\\
&=\sum_{j\in \alpha} \sigma(\alpha-j,j) \sum_{s\in \alpha-j} \sum_{t\in \beta} \sigma(s,\alpha-j+(n+2)-s)\sigma(t,\beta-t) \partial_j \partial_s \partial_t U M_{\alpha-j+(n+2)-s}^{\beta-t}(D^2 U)\\
&+\sum_{j\in \alpha} \sigma(\alpha-j,j) \sum_{t\in \beta} \sigma((n+2),\alpha-j)\sigma(t,\beta-t) \partial_j \partial_{n+2} \partial_t U M_{\alpha-j}^{\beta-t}(D^2 U)\\
&=\sum_{j\in \alpha} \sum_{s\in \alpha-j}\sigma(\alpha-j,j) \sigma(s,\alpha-j-s)\sum_{t\in \beta} \sigma(t,\beta-t) \partial_j \partial_s \partial_t U M_{\alpha-j+(n+2)-s}^{\beta-t}(D^2 U)\\
&+\sum_{j\in \alpha} \sum_{t\in \beta} \sigma(j,\alpha-j) \sigma(t,\beta-t) \partial_j \partial_{n+2} \partial_t U M_{\alpha-j}^{\beta-t}(D^2 U).
\end{align*}
Note that for any $i_1,i_2\in \alpha$ with $i_1\neq i_2$
\begin{align*}
\sigma(\alpha-i_1,i_1) \sigma(i_2, \alpha-i_1-i_2)=(-1)^{k-1} \sigma(i_1,\alpha-i_1-i_2) \sigma(i_2, \alpha-i_1-i_2) (-1)^{\tau(i_1,i_2)},
\end{align*}
where
$$\tau(i_1,i_2):=\begin{cases}
1,~~~~i_1>i_2,\\
0,~~~~i_1<i_2,
\end{cases}
$$
which implies that
$$\sigma(\alpha-i_1,i_1) \sigma(i_2, \alpha-i_1-i_2)=-\sigma(\alpha-i_2,i_2) \sigma(i_1, \alpha-i_1-i_2).$$
Combing with the above results, we can easily obtain
\begin{equation}
\partial_{n+2} \left(M_{\alpha}^{\beta}(D^2 U)\right)=\sum_{j\in \alpha}\partial_j \left( \sigma(\alpha-j,j) M_{\alpha-j+(n+2)}^{\beta} (D^2 U)\right).
\end{equation}
Then taking the sum in $i$ and recalling (\ref{formulathm102}), we have
\begin{align*}
\int_{\mathbb{R}^n} M_{\alpha}^{\alpha} (D^2 u) \varphi dx&=\int_{\mathbb{R}^n\times(0,1)^2}\sum_{i\in \alpha+(n+1)} \sigma(\alpha+(n+1)-i,i) \\
&\left\{-\sum_{j\in\alpha} \sigma(\alpha-j,j) M_{\alpha-j+(n+2)}^{\beta} (D^2 U) \partial_{i,j} \Phi
+ M_{\alpha}^{\beta}(D^2 U) \partial_{i,n+2} \Phi \right\} d\widetilde{x},
\end{align*}
which completes the proof.
\end{proof}

\begin{proof}[\bf Proof of Theorem \ref{thm21}]
According to  a well known extension theorem  of Stein in \cite{ST1,ST2}, there is a bounded linear extension operator
$$E: B(2-\frac{2}{k},k) \rightarrow W^{2,k} (\mathbb{R}^n\times [0,1)^2).$$
Let $U_1, U_2\in C_c^2(\mathbb{R}^n)$  be extensions of $u_1$ and $u_2$ to $\mathbb{R}^n\times(0,1)^2$, respectively, such that
$$\|D^2 U_i\|_{L^k(\mathbb{R}^n\times(0,1)^2)}\leq C\|u_i\|_{2-\frac{2}{k},k},~~i=1,2,$$
and
$$\|D^2U_1-D^2U_2\|_{L^k(\mathbb{R}^n\times(0,1)^2)}\leq C\|u_1-u_2\|_{2-\frac{2}{k},k}.$$
Let $\Phi\in C_c^2(\mathbb{R}^n\times[0,1)^2)$ be an extension of $\varphi$ such that
$$\|D^2 \Phi\|_{L^{\infty}(\mathbb{R}^n\times(0,1)^2)} \leq C \|D^2 \varphi\|_{L^{\infty}(\mathbb{R}^n)}.$$
Since
$$|M_{\alpha}^{\beta} (A)-M_{\alpha}^{\beta} (B)|\leq C\left(|A|+|B|\right)^{k-1}|A-B|$$
for any $\alpha,\beta \in I(k,n+2)$ and $(n+2)\times(n+2)$ matrixes $A,B$. It follows from Lemma \ref{lem21} and H\"{o}lder's inequality that
\begin{align*}
&\left|\int_{\mathbb{R}^n} (F_k[u_1]-F_k[u_2]) \varphi dx\right| \\
&\leq \sum_{\alpha\in I(k,n)} \sum_{i\in \alpha+(n+1)} \sum_{j\in \alpha+(n+2)}\int_{\mathbb{R}^n\times (0,1)^2}|M_{\alpha-j+(n+2)}^{\alpha-i+(n+1)} (D^2 U_1)-M_{\alpha-j+(n+2)}^{\alpha-i+(n+1)} (D^2 U_2)|
|\partial_{i,j}\Phi| d\widetilde{x} \\
&\leq C \int_{\mathbb{R}^n\times (0,1)^2}(|D^2U_1|+|D^2 U_2|)^{k-1}|D^2(U_1-U_2)||D^2 \Phi| d\widetilde{x} \\
&\leq C \|u_1-u_2\|_{2-\frac{2}{k},k}\left(\|u_1\|^{k-1}_{2-\frac{2}{k},k}+\|u_2\|^{k-1}_{2-\frac{2}{k},k}\right) \|D^2\varphi\|_{L^{\infty}}.
\end{align*}
This completes the proof of Theorem \ref{thm21}.
\end{proof}

\section{Optimality results \MyRoman{1}: $1<p\leq k,$ $s+\frac{2}{k}<2+\frac{n}{p}-\frac{n}{k}$}
In this section we establish the optimality result of Theorem 1.2 in the case $1<p\leq k$ and  $s+\frac{2}{k}<2+\frac{n}{p}-\frac{n}{k}$.
For this, we need the following lemma

\begin{lemma}\label{lem11}
Let $g\in C_c^{\infty}(B(0,1))$ be given as
\begin{equation}\label{formulaorlem01}
g(x)=\int_0^{|x|} h(r) dr
\end{equation}
for any $x\in \mathbb{R}^n$,  where $h\in C_c^{\infty}((0,1))$ and satisfies
$$\int_0^1 h(r)dr=0,~~~~\int_0^1h^{k}(r)r^{-k+n+1} dr\neq 0.$$
Then
\begin{equation}\label{formulaorlem02}
\sum_{\alpha \in I(k,n)}\int_{B(0,1)} M_{\alpha}^{\alpha}( D^2 g(x)) |x|^2 dx\neq 0.
\end{equation}
\end{lemma}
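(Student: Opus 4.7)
The plan is to exploit the radial symmetry of $g$, thereby reducing the computation to a one-dimensional integral in $r=|x|$, and then to close the argument with one integration by parts.

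First, since $g(x)=G(|x|)$ with $G'(r)=h(r)$, a direct computation gives
$$D^2 g(x)=\frac{h(r)}{r}\Bigl(I-\frac{x\otimes x}{r^{2}}\Bigr)+h'(r)\,\frac{x\otimes x}{r^{2}},\qquad r=|x|,$$
so $D^2g(x)$ has eigenvalue $h'(r)$ in the radial direction $x/|x|$ and eigenvalue $h(r)/r$ with multiplicity $n-1$ on the orthogonal tangent hyperplane. Using the identity $\sum_{\alpha\in I(k,n)}M_\alpha^\alpha(A)=S_k(\lambda(A))$ for symmetric $A$, it follows that
$$\sum_{\alpha\in I(k,n)} M_\alpha^\alpha\bigl(D^2g(x)\bigr)=\binom{n-1}{k-1}h'(r)\,\frac{h(r)^{k-1}}{r^{k-1}}+\binom{n-1}{k}\,\frac{h(r)^{k}}{r^{k}}.$$

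Passing to polar coordinates and inserting the weight $|x|^{2}=r^{2}$, the integral in \eqref{formulaorlem02} reduces to
$$|S^{n-1}|\int_0^1\Bigl[\binom{n-1}{k-1}h'(r)h(r)^{k-1}r^{n+2-k}+\binom{n-1}{k}h(r)^{k}r^{n+1-k}\Bigr]dr.$$
The key step is to use $h'h^{k-1}=\tfrac{1}{k}(h^{k})'$ and integrate by parts; since $h\in C_c^{\infty}((0,1))$, the boundary contributions vanish and
$$\int_0^1 h'(r)h(r)^{k-1}r^{n+2-k}dr=-\frac{n+2-k}{k}\int_0^1 h(r)^{k}r^{n+1-k}dr.$$
Substituting, the whole expression collapses to $|S^{n-1}|\,c_{n,k}\int_0^1 h(r)^{k}r^{n+1-k}\,dr$ with coefficient
$$c_{n,k}:=\binom{n-1}{k}-\frac{n+2-k}{k}\binom{n-1}{k-1}.$$

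The remaining task is to verify $c_{n,k}\neq 0$. Putting both binomials over the common denominator $k!(n-k)!$, one computes $c_{n,k}=-\dfrac{2(n-1)!}{k!(n-k)!}$, which is strictly negative for every $2\le k\le n$. Combined with the hypothesis $\int_0^1 h(r)^{k}r^{-k+n+1}\,dr\neq 0$, this yields \eqref{formulaorlem02}. The other hypothesis $\int_0^1 h\,dr=0$ plays no role in the integral identity above; it enters only to guarantee that $g\equiv 0$ for $|x|\ge 1$, so that $g\in C_c^{\infty}(B(0,1))$. I do not foresee any serious obstacle: the only delicate point is the bookkeeping with the combinatorial coefficients, and it works out cleanly precisely because the integration by parts matches the powers of $r$ in the two terms of $S_k$.
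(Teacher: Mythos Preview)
Your argument is correct. The eigenvalue computation for a radial function, the formula $S_k(\lambda)=\binom{n-1}{k-1}h'(r)(h/r)^{k-1}+\binom{n-1}{k}(h/r)^k$, the polar-coordinate reduction, the integration by parts, and the evaluation $c_{n,k}=-2(n-1)!/(k!(n-k)!)$ all check out (including the boundary case $k=n$, where $\binom{n-1}{k}=0$ and $c_{n,n}=-2/n$).

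The paper proceeds differently. Rather than pass to eigenvalues, it fixes a single $\alpha\in I(k,n)$, writes $D^2g=|x|^{-3}(A+B)$ with $A$ a rank-one matrix, and expands $M_\alpha^\alpha(A+B)$ via the Binet formula. This yields three radial integrals (called \MyRoman{2}, \MyRoman{3}, \MyRoman{4}), each of which is reduced in polar coordinates to a multiple of $\int_0^1 h^k r^{n+1-k}\,dr$; the combination gives $-2/n$ times the basic integral for each $\alpha$, and summing over $\alpha$ recovers your constant. Your route is shorter and more transparent: exploiting the spectral identity $\sum_\alpha M_\alpha^\alpha=S_k(\lambda)$ for the symmetric matrix $D^2g$ bypasses the Binet expansion and the per-minor bookkeeping entirely, at the cost of not seeing that each individual minor integral is already nonzero. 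The paper's approach, by contrast, establishes the slightly stronger fact that $\int M_\alpha^\alpha(D^2g)|x|^2\,dx\neq 0$ for every $\alpha$, though only the sum is needed for the application.
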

\begin{proof}
According to the symmetry of integral, it is sufficient to show
\begin{equation}\label{formulaorlem03}
\int_{B(0,1)} M_{\alpha}^{\alpha}( D^2 g(x)) |x|^2 dx\neq 0
\end{equation}
for any $\alpha \in I(k,n)$.  It is easy to see that
$$D^2 g=\frac{1}{|x|^3}(A+B),$$
where $A=(a_{ij})_{n\times n}$ and $B=(b_{ij})_{n\times n}$ are $n\times n$ matrices such that
$$a_{i,j}=h'(|x|)|x|x_ix_j,~~b_{ij}=h(|x|)(\delta_{i}^{j}|x|^2-x_ix_j),~~~~i,j=1,\ldots,n.$$
Using Binet formula and the fact $\mbox{rank}(A)=1$, one has
\begin{align*}
M_{\alpha}^{\alpha}(A+B)&=M_{\alpha}^{\alpha}(B)+\sum_{i\in \alpha}\sum_{j\in\alpha}\sigma(i,\alpha-i)\sigma(j,\alpha-j)a_{ij}M_{\alpha-i}^{\alpha-j}(B)\\
&=h^k(|x|)|x|^{2k-2}(|x|^2-\sum_{i\in\alpha} x_i^2)+h'(|x|)h^{k-1}(|x|)|x|\cdot \MyRoman{1},
\end{align*}
where
\begin{align*}
\MyRoman{1}:&=\sum_{i\in \alpha}\sum_{j\in\alpha}\sigma(i,\alpha-i)\sigma(j,\alpha-j) x_ix_j M_{\alpha-i}^{\alpha-j}\left((|x|^2\delta_i^j-x_ix_j)_{n\times n}\right)\\
&=\sum_{i\in \alpha} x_i^2 M_{\alpha-i}^{\alpha-i}\left((|x|^2\delta_i^j-x_ix_j)_{n\times n}\right)+\sum_{i\in \alpha} \sum_{j\in \alpha-i} \sigma(i,\alpha-i)\sigma(j,\alpha-j) x_ix_j M_{\alpha-i}^{\alpha-j}\left((|x|^2\delta_i^j-x_ix_j)_{n\times n}\right)\\
&=\sum_{i\in \alpha} x_i^2 |x|^{2k-4}(|x|^2-\sum_{j\in \alpha-i}x_j^2)+\sum_{i\in \alpha} \sum_{j\in \alpha-i}
x_ix_j x_ix_j |x|^{2k-4}\\
&=|x|^{2k-2}\sum_{i\in\alpha} x_i^2.
\end{align*}
Hence
\begin{align*}
\int_{B(0,1)}M_{\alpha}^{\alpha}( D^2 g)|x|^2 dx=\int_{B(0,1)} |x|^{-3k+2}M_{\alpha}^{\alpha}(A+B) dx=\MyRoman{2}-\MyRoman{3}+\MyRoman{4},
\end{align*}
where
$$\MyRoman{2}:=\int_{B(0,1)} h^k(|x|) |x|^{-k+2} dx,$$
$$\MyRoman{3}:=\int_{B(0,1)} h^k(|x|) |x|^{-k} \sum_{i\in \alpha} x_i^2dx,$$
and
$$\MyRoman{4}:=\int_{B(0,1)} h^{k-1}(|x|)h'(|x|)|x|^{-k+1} \sum_{i\in \alpha} x_i^2dx.$$
Then integration in polar coordinates gives
%\begin{align*}
%\MyRoman{4}&=\int_0^1\int_0^{\pi}\cdot\cdot\cdot\int_0^{\pi}\int_0^{2\pi}h^{k-1}(r)h'(r)r^{-k+n+2} \\
%&\left(\sin^n\theta_1 \sin^{n-1}\theta_{2}\cdot\cdot\cdot\sin^{k+1}\theta_{n-k}
%\cdot\sin^{k-2}\theta_{n-k+1}\cdot\cdot\cdot\sin\theta_{n-2} \right) drd\theta_1\cdot\cdot\cdot d\theta_{n-1}\\
%&=2\pi I(n)I(n-1)\cdot\cdot\cdot I(k+1)I(k-2)\cdot\cdot\cdot I(1)\int_0^1 h^{k-1}(r)h'(r)r^{-k+n+2}dr\\
%&=\frac{k-n-2}{n} 2\pi\prod_{i=1}^{n-2}I(i) \int_0^1 h^k(r) r^{-k+n+1} dr,
%\end{align*}
$$\MyRoman{4}=\frac{k-n-2}{n} 2\pi\prod_{i=1}^{n-2}I(i) \int_0^1 h^k(r) r^{-k+n+1} dr,$$
where $I(s)=\int_0^{\pi} \sin^s \theta d\theta$. Similarly,
$$\MyRoman{3}=\frac{k}{n} 2\pi\prod_{i=1}^{n-2}I(i) \int_0^1 h^k(r) r^{-k+n+1} dr,$$
and
$$\MyRoman{2}= 2\pi\prod_{i=1}^{n-2}I(i) \int_0^1 h^k(r) r^{-k+n+1} dr,$$
which implies (\ref{formulaorlem03}), and then the proof is complete.
\end{proof}

\begin{theorem}
 Let $3\leq k\leq n$, $1<p\leq k$ and  $0<s<2$ with $s+\frac{2}{k}<2+\frac{n}{p}-\frac{n}{k}$. Then there exist a sequence $\{u_m\}\subset C_c^{\infty}(\mathbb{R}^n)$ and $\varphi \in C_c^{\infty}(\mathbb{R}^n)$ satisfying (\ref{formulaorthm31}) and (\ref{formulaorthm32}).
\end{theorem}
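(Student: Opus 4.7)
The plan is a single-bump concentration argument built on Lemma \ref{lem11}. Let $g\in C_c^\infty(B(0,1))$ be as in Lemma \ref{lem11}, so that
\[
C_0 := \sum_{\alpha\in I(k,n)}\int_{B(0,1)} M_\alpha^\alpha(D^2 g)\,|x|^2\,dx \neq 0.
\]
Fix $\eta\in C_c^\infty(\mathbb{R}^n)$ with $\eta\equiv 1$ on a neighborhood of the origin, and set $\varphi(x):=|x|^2\eta(x)$. For parameters $r_m\downarrow 0$ and $\lambda_m>0$ to be selected, define
\[
u_m(x) := \lambda_m\,g(x/r_m)\in C_c^\infty(\mathbb{R}^n).
\]

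Routine changes of variables give the scaling relations
\[
\|u_m\|_{L^p}=\lambda_m r_m^{n/p}\|g\|_{L^p},\quad \|Du_m\|_{L^p}=\lambda_m r_m^{n/p-1}\|Dg\|_{L^p},\quad [Du_m]_{s-1,p}=\lambda_m r_m^{n/p-s}[Dg]_{s-1,p}.
\]
Since $s>1$ and $r_m\leq 1$, the seminorm dominates, so $\|u_m\|_{s,p}\leq C\,\lambda_m r_m^{n/p-s}$. On the $k$-Hessian side, the homogeneity $M_\alpha^\alpha(cA)=c^k M_\alpha^\alpha(A)$ and a change of variables yield, once $r_m$ is small enough that $\eta\equiv 1$ on $\mathrm{supp}(u_m)\subset B(0,r_m)$,
\[
\int_{\mathbb{R}^n} F_k[u_m]\,\varphi\,dx = \sum_{\alpha\in I(k,n)}\lambda_m^k r_m^{-2k}\int M_\alpha^\alpha((D^2 g)(x/r_m))\,|x|^2\,dx = C_0\,\lambda_m^k r_m^{\,n+2-2k}.
\]

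Writing $\lambda_m = r_m^{\,a}$, the two requirements $\|u_m\|_{s,p}\to 0$ and $|\int F_k[u_m]\varphi\,dx|\to\infty$ become
\[
a+\tfrac{n}{p}-s > 0 \qquad\text{and}\qquad ka+n+2-2k<0,
\]
i.e.\ $a\in\bigl(s-\tfrac{n}{p},\,2-\tfrac{2}{k}-\tfrac{n}{k}\bigr)$. This interval is non-empty exactly when $s-\tfrac{n}{p}<2-\tfrac{2}{k}-\tfrac{n}{k}$, which is precisely the hypothesis $s+\tfrac{2}{k}<2+\tfrac{n}{p}-\tfrac{n}{k}$. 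Picking any such $a$ yields the claimed sequence, establishing \eqref{formulaorthm31} and \eqref{formulaorthm32}.

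The only step requiring genuine computation is the scaling of $[Du_m]_{s-1,p}$, which comes from a standard substitution in the double integral defining the seminorm; the rest is bookkeeping that translates the hypothesis directly into the non-emptiness of the admissible range of $a$. Note also that the argument makes no use of the restriction $p\leq k$ as such; that assumption only determines which side of the embedding threshold of the remark places us in, thereby identifying the case where the hypothesis reduces to $s+\tfrac{2}{k}<2+\tfrac{n}{p}-\tfrac{n}{k}$.
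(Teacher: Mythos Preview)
Your argument is correct and follows essentially the same rescaling strategy as the paper: the paper sets $u_m(x)=m^{-\rho}g(mx)$ (your $r_m=1/m$, $a=\rho$), bounds the Besov norm via the interpolation $\|u\|_{s,p}\le\|u\|_{L^p}^{1-s/2}\|D^2u\|_{L^p}^{s/2}$ instead of your direct seminorm scaling, and takes $\varphi(x)=|x|^2+O(|x|^3)$ near $0$ rather than exactly $|x|^2\eta(x)$, picking up a harmless lower-order remainder. The admissible range $s-\tfrac{n}{p}<\rho<2-\tfrac{2}{k}-\tfrac{n}{k}$ and the invocation of Lemma~\ref{lem11} are identical.
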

\begin{proof}
Consider $u_{m}: \mathbb{R}^n \rightarrow \mathbb{R}$ defined by
$$u_{m}(x)=m^{-\rho}g(mx),~~~~m>1,$$
where $g$ is given as (\ref{formulaorlem01}) and $\rho$ is a constant such that
\begin{equation}\label{formulaor103}
s-\frac{n}{p}<\rho<2-\frac{n}{k}-\frac{2}{k}.
\end{equation}
On the one hand, we have
$$\|u_{m}\|_{s,p} \leq \|u_{m}\|_{L^p}^{1-\frac{s}{2}} \|D^2 u_{m}\|_{L^p}^{\frac{s}{2}} \leq  m^{s-\rho-\frac{n}{p}} \|g\|_{L^p}^{1-\frac{s}{2}} \|D^2 g\|_{L^p}^{\frac{s}{2}},$$
which implies (\ref{formulaorthm31}).
On the other hand, let $\varphi\in C_c^{\infty}(\mathbb{R}^n)$ be such that $\varphi(x)=|x|^2+O(|x|^3)$ as $x\rightarrow 0$. Then
\begin{align*}
\int_{\mathbb{R}^n} F_k[u_{m}] \varphi dx&=\sum_{\alpha\in I(k,n)}m^{-(\rho-2)k}\int_{\mathbb{R}^n}  M_{\alpha}^{\alpha} (D^2 g(mx)) \varphi(x) dx\\
&=\sum_{\alpha\in I(k,n)}m^{-(\rho-2)k-n}\int_{B(0,1)}  M_{\alpha}^{\alpha} (D^2 g) \varphi(\frac{x}{m}) dx\\
&=m^{2k-\rho k-n-2} \sum_{\alpha\in I(k,n)} \int_{B(0,1)}  M_{\alpha}^{\alpha} (D^2 g) |x|^2 dx+O(m^{2k-\rho k-n-3}).
\end{align*}
Collecting Lemma \ref{lem11} and (\ref{formulaor103}), it follows that
$$\left|\int_{\mathbb{R}^n} F_k[u_{m}] \varphi dx\right| \geq C m^{2k-\rho k-n-2}\rightarrow \infty~~~~\mbox{as}~m\rightarrow \infty.$$
Hence the theorem is proved completely.
\end{proof}

\section{Optimality results \MyRoman{2}: $k<p,$ $s<2-\frac{2}{k}$}
In this section we consider the case $p>k$ and $0<s<\frac{2}{k}$ for Theorem 1.2.
We begin with the following simple lemma which is a formula due to Chen \cite{CB} for the Hessian determinant of functions as tensor product.
\begin{lemma}\label{lemma21}
Let $2\leq k\leq n$, $\alpha\in I(k,n)$ and $F:\mathbb{R}^n \rightarrow \mathbb{R}$ be given by a tensor product
$$F(x)= \prod_{i=1}^n f_i(x_i), ~~~~x=(x_1,\ldots,x_n)\in \mathbb{R}^n,$$
where $f_i\in C^2(\mathbb{R})$, $i=1,\ldots, n$. Then
$$M_{\alpha}^{\alpha}(D^2F)=(F_{\overline{\alpha}}(x_{\overline{\alpha}}))^k(F_{\alpha}(x_{\alpha}))^{k-2} \left\{\left(\prod_{i\in\alpha} g_i(x_i)\right) +\sum_{j\in \alpha} \left(\prod_{i\in\alpha-j} g_i(x_i)\right)  [f'_j(x_j)]^2\right\}$$
with
$$g_i(x_i)=f^{''}_i(x_i)f_i(x_i)-[f'_i(x_i)]^2,~~~~i=1,\ldots,n.$$
\end{lemma}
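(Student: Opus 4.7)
The plan is to use the tensor-product structure of $F$ to factor the $\alpha$-block of the Hessian, reducing the problem to a $k\times k$ determinant in the $\alpha$-variables, and then recognize the resulting matrix as a rank-one perturbation of a diagonal matrix so that the classical matrix determinant lemma applies.

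First I would compute, directly from the product rule, the entries of $D^2F$ restricted to $\alpha$: for any $i,j\in\alpha$,
\[
\partial_i\partial_j F(x) = F_{\overline{\alpha}}(x_{\overline{\alpha}}) \cdot \tilde H_{ij},
\]
where $\tilde H$ is the $k\times k$ matrix indexed by $\alpha$ with
\[
\tilde H_{ii} = f_i''(x_i) \prod_{\ell \in \alpha-i} f_\ell(x_\ell), \qquad \tilde H_{ij} = f_i'(x_i)\, f_j'(x_j) \prod_{\ell \in \alpha-i-j} f_\ell(x_\ell) \quad (i\neq j).
\]
Since $F_{\overline{\alpha}}(x_{\overline{\alpha}})$ is a common scalar factor of every entry at fixed $x$, multilinearity of the determinant in each row yields $M_\alpha^\alpha(D^2 F) = (F_{\overline{\alpha}})^k \det \tilde H$, so the task reduces to computing $\det \tilde H$.

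Next, on the open set where $f_i(x_i)\neq 0$ for every $i\in\alpha$ (equivalently $F_\alpha\neq 0$), I would divide each entry of $\tilde H$ by $F_\alpha$. Setting $a_i = f_i'(x_i)/f_i(x_i)$ and $c_i = g_i(x_i)/f_i(x_i)^2$, the off-diagonal entries simplify to $a_ia_j$, and on the diagonal the identity $c_i + a_i^2 = f_i''(x_i)/f_i(x_i)$ gives the desired rank-one perturbation
\[
\frac{1}{F_\alpha}\tilde H = \operatorname{diag}(c_1,\ldots,c_k) + v v^T, \qquad v = (a_1,\ldots,a_k)^T.
\]
The matrix determinant lemma then gives $\det\!\bigl(\frac{1}{F_\alpha}\tilde H\bigr) = \prod_{i\in\alpha} c_i + \sum_{j\in\alpha} v_j^2 \prod_{i\in\alpha-j} c_i$. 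Multiplying through by $F_\alpha^k$ and observing that the denominators $\prod_{i\in\alpha} f_i^2 = F_\alpha^2$ cancel precisely against the extra factors (in the $j$th summand the missing $f_j^2$ is supplied by $v_j^2 = (f_j')^2/f_j^2$), one arrives at
\[
\det \tilde H = F_\alpha^{k-2}\left[\prod_{i\in\alpha} g_i(x_i) + \sum_{j\in\alpha} (f_j'(x_j))^2 \prod_{i\in\alpha-j} g_i(x_i)\right],
\]
which combined with the earlier reduction produces the claimed formula.

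Finally, both sides of this identity are polynomials in the $3k$ quantities $\{f_i(x_i),f_i'(x_i),f_i''(x_i)\}_{i\in\alpha}$ at each fixed $x$, so their agreement on the dense open set $\{F_\alpha \neq 0\}$ forces agreement everywhere, and the non-vanishing assumption on $F_\alpha$ may be dropped. The main obstacle is spotting the correct algebraic decomposition $\frac{1}{F_\alpha}\tilde H = \operatorname{diag}(c_i) + vv^T$: the splitting $f_i''/f_i = c_i + a_i^2$ of the diagonal into a genuinely diagonal part and the diagonal contribution of $vv^T$ is what makes the rank-one perturbation visible, and after that the matrix determinant lemma together with the density argument makes the remaining bookkeeping routine.
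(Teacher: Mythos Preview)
Your proof is correct. The paper itself does not supply a proof of this lemma: it simply cites the formula as ``due to Chen \cite{CB}'' and states it without argument, so there is no in-paper proof to compare against. Your approach---factoring out $F_{\overline{\alpha}}$, scaling by $F_\alpha^{-1}$ to expose the decomposition $\operatorname{diag}(c_i)+vv^T$, and then invoking the matrix determinant lemma---is a clean, self-contained derivation; the closing polynomial-identity argument to remove the restriction $F_\alpha\neq 0$ is the right way to handle the degenerate set.
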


 Let $\gamma=(1,\ldots,k-1)\in I(k-1,n)$ and $\Omega\subset \{x \in \mathbb{R}^n\mid x_{i}>0~\mbox{for all}~i\in\overline{\gamma}\}$ be a nonempty open set. For any $m\in \mathbb{N}^{+}$, define $u_m: \mathbb{R}^n \rightarrow \mathbb{R}$
\begin{equation}\label{formulaor201}
u_m=m^{-\rho} \chi(x) P_{\gamma}(x_{\gamma})\cdot Q_{\overline{\gamma}}(x_{\overline{\gamma}}),
\end{equation}
where the functions $P,Q: \mathbb{R}^n\rightarrow \mathbb{R}$ are given by
 $$P(x):=\prod_{i=1}^{n}\sin^2(mx_i),~~~~Q(x):=\prod_{i=1}^{n}x_i.$$
Assume that $\max\{s,2-\frac{4}{k}\}<\rho<2-\frac{2}{k}$, and $\chi\in C_c^{\infty}(\mathbb{R}^n)$ is a smooth cutoff function with $\chi=1$ on $\Omega$.

\begin{theorem}
Let $3\leq k\leq n$, $k<p<\infty$ and $0<s<2-\frac{2}{k}$. Let $u_m\in C_c^{\infty}(\mathbb{R}^n)$ and $\Omega$ be defined as above. Then for any $\Omega'\subset\subset \Omega$, $\varphi\in C_c^{\infty}(\Omega)$ with $\varphi\geq 0$ and $\varphi=1$ on $\Omega'$, the functions $u_m$ satisfy (\ref{formulaorthm31}) and (\ref{formulaorthm32}).
\end{theorem}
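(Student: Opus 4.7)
My plan is to verify the two claims (\ref{formulaorthm31}) and (\ref{formulaorthm32}) separately. For the Besov norm bound I will rely on standard interpolation. Since $\chi$, $Q_{\overline{\gamma}}$ and their derivatives are uniformly bounded on $\mathrm{supp}\,\chi$ while each partial derivative of $P_\gamma$ of order $j$ contributes a pointwise factor $m^j$, I obtain $\|u_m\|_{W^{j,p}}\lesssim m^{j-\rho}$ for $j=0,1,2$. Writing $B(s,p)$ as the real interpolation space $(L^p,W^{2,p})_{s/2,p}$ (or $(W^{1,p},W^{2,p})_{s-1,p}$ when $1<s<2$) then yields $\|u_m\|_{s,p}\lesssim m^{s-\rho}\to 0$ by the assumption $\rho>s$.

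For the divergence (\ref{formulaorthm32}), the key observation is that $\chi\equiv 1$ on $\Omega\supset\mathrm{supp}\,\varphi$, so on $\mathrm{supp}\,\varphi$ we have the clean tensor product $u_m=m^{-\rho}F$ with $F(x)=\prod_{i=1}^n f_i(x_i)$, where $f_i(x_i)=\sin^2(mx_i)$ for $i\in\gamma$ and $f_j(x_j)=x_j$ for $j\in\overline{\gamma}$. Lemma \ref{lemma21} applies directly. The identities $g_i=-2m^2\sin^2(mx_i)$, $(f_i')^2=m^2\sin^2(2mx_i)$ for $i\in\gamma$, together with $g_j=-1$, $(f_j')^2=1$ for $j\in\overline{\gamma}$, show that $M_\alpha^\alpha(D^2 F)$ carries the leading factor $m^{2|\alpha\cap\gamma|}$. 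The maximum value $|\alpha\cap\gamma|=k-1$ is attained precisely when $\alpha=\gamma\cup\{j\}$, $j\in\overline{\gamma}$, and for such $\alpha$ an algebraic cancellation inside the bracket of Lemma \ref{lemma21} (the $\prod_{i\in\alpha}g_i$ term and the $j$-th summand cancel because $(-1)^k+(-1)^{k-1}=0$) leaves
\[
M_{\gamma\cup\{j\}}^{\gamma\cup\{j\}}(D^2 F)=(-1)^{k-1}2^k m^{2(k-1)}\frac{Q_{\overline{\gamma}}^k\, P_\gamma^{k-1}}{x_j^2}\sum_{j'\in\gamma}\cos^2(mx_{j'}).
\]

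Summing over $j\in\overline{\gamma}$ and collecting the remaining $\alpha$'s (which contribute at most $O(m^{2(k-2)})$ since $|\alpha\cap\gamma|\leq k-2$ there), then multiplying by $m^{-\rho k}$ and integrating against $\varphi$, I obtain
\[
\int F_k[u_m]\varphi\,dx=(-1)^{k-1}c_k\, m^{2k-2-\rho k}\!\int_\Omega\varphi(x)\,Q_{\overline{\gamma}}^k\Bigl(\sum_{j\in\overline{\gamma}}x_j^{-2}\Bigr)dx+O(m^{2(k-2)-\rho k}),
\]
where $c_k>0$ comes from the positive averages of $\prod_{i\in\gamma}\sin^{2(k-1)}(mx_i)$ and of $\cos^2(mx_{j'})$; the mean-oscillation split is justified by a routine Riemann--Lebesgue estimate on the smooth coefficient $\varphi(x)Q_{\overline{\gamma}}^k(\sum_jx_j^{-2})$. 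Because $\varphi\geq 0$ with $\varphi\equiv 1$ on $\Omega'$ of positive measure and $Q_{\overline{\gamma}}^k$ is strictly positive on $\Omega$, the limiting integral is a strictly positive constant. The assumption $\rho<2-2/k$ gives $2k-2-\rho k>0$, while $\rho>2-4/k$ ensures the error $O(m^{2(k-2)-\rho k})$ is negligible, so $|\int F_k[u_m]\varphi\,dx|\to\infty$. The principal technical hurdle is the careful algebraic book-keeping inside Lemma \ref{lemma21}: one must track the permutation signs $\sigma(i,\alpha-i)$ and verify that the leading coefficient survives the signed sum over $\alpha$ and the oscillation averaging; the residual sign $(-1)^{k-1}$ (relevant only when $k$ is even) is compatible with divergence in absolute value, which is what actually obstructs continuity in Theorem \ref{thm2}.
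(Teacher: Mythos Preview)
Your proposal is correct and follows essentially the same route as the paper. Both arguments interpolate to get $\|u_m\|_{s,p}\lesssim m^{s-\rho}\to 0$, apply Lemma~\ref{lemma21} to the tensor product on $\Omega$, single out the minors with $\alpha=\gamma\cup\{j\}$ (exploiting exactly the cancellation you describe) to produce the leading contribution of order $m^{2k-2-\rho k}$, and bound the remaining $\alpha$'s with $|\alpha\cap\gamma|\le k-2$ by $O(m^{2k-4-\rho k})$; the only cosmetic difference is that the paper obtains the lower bound directly from positivity of the integrand on $\Omega'$, whereas you pass to the limit via a Riemann--Lebesgue averaging of the oscillatory factors.
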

\begin{proof}
According to the facts that $\|u_m\|_{L^{\infty}} \leq C m^{-\rho}$ and $\|D^2u_m\|_{L^{\infty}}\leq C m^{2-\rho}$, where constants depending on the measure of $\mbox{spt} \chi$, it follows that
$$\|u_m\|_{s,p}\leq C\|u_m\|_{L^p}^{1-\frac{s}{2}}\|u_m\|_{W^{2,p}}^{\frac{s}{2}}\leq Cm^{s-\rho},$$
which implies (\ref{formulaorthm31}).

On the other hand, it follows from our hypotheses on the cutoff function $\chi$ that
$$u_m(x)=m^{-\rho} P_{\gamma}Q_{\overline{\gamma}}~~~~x\in \Omega.$$
For simplicity, we may set
$$I_c:=\{\alpha\in I(k,n)\mid \alpha=(\alpha',\alpha^{''}), \alpha' \subset \gamma,\alpha^{''} \subset \overline{\gamma}, |\alpha'|=c\}.$$
Then
\begin{equation}\label{formulaor204}
F_k[u_m]=\sum_{\alpha\in I(k,n)} M_{\alpha}^{\alpha} (D^2 u_m)=\sum_{c=0}^{k-1} \sum_{\alpha\in I_c} M_{\alpha}^{\alpha} (D^2 u_m).
\end{equation}
Hence
\begin{equation}\label{formulaor205}
\left| \int F_k[u_m] \varphi dx\right|\geq \left|\sum_{\alpha\in I_{k-1}} \int M_{\alpha}^{\alpha} (D^2 u_m)\varphi dx \right| -\sum_{c=0}^{k-2} \sum_{\alpha\in I_c}\left| \int M_{\alpha}^{\alpha} (D^2 u_m) \varphi dx\right|.
\end{equation}
For any $\alpha\in I_{k-1}$, i.e., $\alpha=\gamma+j$ with $j\in \overline{\gamma}$, by using Lemma \ref{lemma21} we obtain that
\begin{align*}
M_{\alpha}^{\alpha} (D^2 u_m) &= m^{-\rho k}(Q_{\overline{\gamma}-j})^k \det \left( D^2 (P_{\gamma}\cdot x_j)\right)\\
&=(-1)^{k-1}2^km^{2k-2-\rho k}(Q_{\overline{\gamma}-j})^k (P_{\gamma})^{k-1}x_j^{k-2}\left(\sum_{i\in \gamma}\cos^2 (m x_i)\right).
\end{align*}
So
\begin{align*}\label{formulaor206}
\left|\sum_{\alpha\in I_{k-1}} \int M_{\alpha}^{\alpha} (D^2 u_m)\varphi dx \right|&=2^km^{2k-2-\rho k}
\left|\sum_{j\in \overline{\gamma}} \int (Q_{\overline{\gamma}-j})^k (P_{\gamma})^{k-1}x_j^{k-2}\left(\sum_{i\in \gamma}\cos^2 (m x_i)\right) \varphi dx\right|\\
&\geq C m^{2k-2-\rho k} \sum_{i\in \gamma}\sum_{j\in \overline{\gamma}} \int_{\Omega'} (Q_{\overline{\gamma}-j})^k (P_{\gamma})^{k-1} x_j^{k-2}\cos^2 (m x_i) dx\\
&\geq C m^{2k-2-\rho k}.
\end{align*}
For any $\alpha \in I_c(0\leq c\leq k-2)$, i.e., $\alpha=(\alpha',\alpha^{''})$ with $\alpha' \subset \gamma,\alpha^{''} \subset \overline{\gamma}$.
Similarly,
$$M_{\alpha}^{\alpha} (D^2 u_m)=(-1)^{k-1}2^cm^{2c-\rho k}(P_{\gamma-\alpha'}Q_{\overline{\gamma}-\alpha^{''}} )^k (P_{\alpha'})^{k-1}(Q_{\alpha^{''}})^{k-2}(k-c-1+2\sum_{i\in \alpha'} \cos^2 (mx_i)).$$
Hence
\begin{align*}
&\sum_{c=0}^{k-2} \sum_{\alpha\in I_c} \left| \int M_{\alpha}^{\alpha} (D^2 u_m) \varphi dx\right|\\
&\leq \sum_{c=0}^{k-2} 2^c m^{2c-\rho k} \sum_{\alpha\in I_c} \left| \int (P_{\gamma-\alpha'}Q_{\overline{\gamma}-\alpha^{''}} )^k (P_{\alpha'})^{k-1}(Q_{\alpha^{''}})^{k-2}(k-c-1+2\sum_{i\in \alpha'} \cos^2 (mx_i))  \varphi dx\right|\\
&\leq C m^{2k-4-\rho k}.
\end{align*}
By the hypothesis $\max\{s,2-\frac{4}{k}\}<\rho <2-\frac{2}{k}$, we may easily show (\ref{formulaorthm32}). This completes the proof of the theorem.
\end{proof}

\section{Optimality results \MyRoman{3}: $k<p,$ $s=2-2/k$}
We conclude the proof of Theorem \ref{thm2} by showing the results in the remaining case $p>k$ and $s=2-\frac{2}{k}$.

\begin{theorem}\label{thm51}
Let $3\leq k\leq n$, $k<p$ and $s=2-2/k$.  Then there exist a sequence
 $\{u_m\}\subset C_c^{\infty}(\mathbb{R}^n)$ and a function $\varphi \in C_c^{\infty}(\mathbb{R}^n)$ satisfying (\ref{formulaorthm31}) and (\ref{formulaorthm32}).
\end{theorem}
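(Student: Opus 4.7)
At the critical exponent $s = 2-2/k$ with $k<p$, the single-scale Section 4 construction evaluated at $\rho = s$ yields $\|u_m\|_{s,p} \leq C$ and $|\int F_k[u_m]\varphi\,dx| \geq c_0>0$ both uniformly in $m$, so the two quantities carry the same homogeneity in any rescaling $u_m\mapsto \lambda u_m$ and cannot be separated. Likewise, a Section 3 style radial rescaling $u_m = m^{-\rho} g(mx)$ requires the interval $s - n/p < \rho < 2-2/k - n/k$ to be nonempty, which fails for $p>k$. My strategy is to superpose many Section 4 building blocks at \emph{dyadic oscillation frequencies} on the \emph{same} spatial region, producing spectrally separated blocks whose Besov norms add up like $m^{1/p}$ but whose $k$-Hessian integrals add up like $m$; the inequality $1/p<1/k$ coming from $k<p$ then allows the two asymptotics to be decoupled by a single rescaling.

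Concretely, for $j=1,\ldots,m$, set $n_j = 2^j$ and define the Section 4 oscillating block at frequency $n_j$:
\[
w_j(x) := n_j^{-(2-2/k)}\,\chi(x)\,P^{(n_j)}_\gamma(x_\gamma)\,Q_{\overline\gamma}(x_{\overline\gamma}),
\]
with $\chi\in C_c^\infty(\Omega)$ a fixed cutoff. By the Section 4 estimates with $\rho = 2-2/k$, there exist constants $A,c_0>0$ and a single test function $\varphi \in C_c^\infty(\Omega)$, depending only on $\chi$ and $\Omega,\Omega'$, such that $\|w_j\|_{s,p} \leq A$ and $\int F_k[w_j]\,\varphi\,dx \geq c_0$ uniformly in $j$. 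Then set
\[
u_m(x) := m^{-\theta}\sum_{j=1}^m w_j(x),\qquad \theta\in(1/p,1/k).
\]

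For the Besov norm, the factor $P^{(n_j)}_\gamma = \prod_{i\in\gamma}\sin^2(n_j x_i) = 2^{-|\gamma|}\prod_i(1-\cos(2n_j x_i))$ shows that, modulo a low-frequency piece that sums to a geometrically convergent remainder, $w_j$ has Fourier support in a dyadic shell of size comparable to $2n_j = 2^{j+1}$. Consequently the $w_j$ occupy essentially disjoint Littlewood--Paley bands, and the Littlewood--Paley characterization of $B^s_{p,p}$ gives
\[
\Big\|\sum_{j=1}^m w_j\Big\|_{s,p}^p \leq C\sum_{j=1}^m\|w_j\|_{s,p}^p \leq CA^p m,
\]
so $\|u_m\|_{s,p} \leq CA\,m^{1/p-\theta}\to 0$. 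For the Hessian integral, I would expand $M_\alpha^\alpha\bigl(D^2\sum_j w_j\bigr)$ by the Binet formula (\ref{binet}). The \emph{pure} (diagonal) contributions, in which all rows and columns of the minor come from a single $D^2 w_j$, sum to $\sum_j F_k[w_j]$ and give $\int \sum_j F_k[w_j]\,\varphi\,dx \geq c_0 m$. Each \emph{mixed} contribution is a product of $k$ entries of $D^2 w_{j_1},\ldots,D^2 w_{j_k}$ with not all $j_i$ equal; its spatial factor oscillates at total $\gamma$-frequency $\sum_i\pm 2n_{j_i}$, which, by dyadic spacing $n_{j^*}>\sum_{i\ne *}n_{j_i}$, is bounded below by the largest $n_{j^*}=2^{\max_i j_i}$. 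Integrating by parts $N$ times against the smooth $\varphi$ yields a decay factor $n_{j^*}^{-N}$, and combined with the pointwise size bound $\prod_i n_{j_i}^{2/k}\leq n_{j^*}^2$ (using $s=2-2/k$, so each entry of $D^2 w_j$ is of size $n_j^{2/k}$), each mixed term is bounded by $n_{j^*}^{2-N}$; choosing $N>2+k$ and counting the $O((j^*)^{k-1})$ multi-indices with a given maximum $j^*$, the total mixed contribution sums to a constant independent of $m$. Hence $\int F_k[\sum_j w_j]\varphi\,dx \geq \tfrac{c_0}{2}m$ for $m$ large, and rescaling gives $\int F_k[u_m]\varphi\,dx \geq \tfrac{c_0}{2}m^{1-k\theta}\to\infty$ because $\theta<1/k$.

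The principal obstacle is exactly the mixed-term estimate: despite the pointwise magnitudes $|D^2 w_j| \sim n_j^{2/k}$ growing without bound, the non-resonant dyadic frequencies must provide enough oscillatory cancellation to reduce the $k$-fold mixed products to $O(1)$ when integrated against $\varphi$. This requires careful bookkeeping of which entries are mutually resonant (for instance pairs involving the same $j$ and same direction in $\gamma$), separating those into lower-rank ``partial $F_k$'' contributions that are themselves bounded, and verifying that truly non-resonant products decay under iterated integration by parts. The dyadic gap $n_{j^*}>\sum_{i\ne *}n_{j_i}$ is the key analytic ingredient that converts the oscillation into a uniform bound independent of $m$.
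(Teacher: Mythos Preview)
Your high-level strategy---superposing Section~4 blocks at separated frequencies so that the Besov norm grows like an $\ell^p$ sum over Littlewood--Paley shells while the diagonal $k$-Hessian contributions accumulate like an $\ell^1$ sum---is exactly the paper's approach (which in turn follows Baer--Jerison). The genuine gap is in the mixed-term estimate, and it stems from the dyadic choice $n_j=2^j$. Your claim ``integrating by parts $N$ times yields a decay factor $n_{j^*}^{-N}$'' is not correct: when you integrate the oscillation $e^{icn_{j^*}x_l}$ by parts, each derivative lands on the \emph{remaining} factors of the minor, which oscillate at frequencies up to $n_{j^*-1}=n_{j^*}/2$. So each integration by parts gains only a fixed factor $\sim 1/2$, not $n_{j^*}^{-1}$, and the resulting bound is $\sim n_{j^*}^{2}\cdot 2^{-N}$ rather than $n_{j^*}^{2-N}$; summing over $j^*$ diverges for every fixed $N$. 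There is a second problem: your frequency lower bound ``$\sum_i\pm 2n_{j_i}\geq n_{j^*}$ by dyadic spacing'' assumes the $j_i$ are distinct, but a mixed multi-index can have several rows sharing the top value $j^*$, and then contributions like $e^{2in_{j^*}x_l}\cdot e^{-2in_{j^*}x_l}$ produce a zero-frequency component with no oscillation to exploit at all.

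The paper repairs both issues by taking super-exponentially separated frequencies $n_l=m^{k^{3l}}$, so that $n_{l^*-1}^{2k}\ll n_{l^*}^{2/k}$ and two integrations by parts already suffice, and by inserting weights $l^{-1/k}$ so that the Besov norm stays uniformly bounded while the diagonal contribution grows like $\sum_l 1/l\sim\log m$; the final rescaling is then by $(\log m)^{-1/(2k)}$. Even with this spacing, a nontrivial cancellation must be proved first (Step~6 of the paper's Lemma~5.3): for the configurations where several rows carry the top frequency, one shows that the zero-frequency Fourier coefficient of the corresponding high-frequency sub-minor vanishes, so that integration by parts becomes available. Your outline gestures at this (``separating those into lower-rank `partial $F_k$' contributions''), but it is the heart of the argument and cannot be handled as bookkeeping; nor does it survive without the super-exponential gap.
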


For $m\in \mathbb{N}$ with $m\geq 2$, let
$$n_l=m^{k^{3l}},~~~~l=1,2,\ldots, m.$$
Define $g_l: \mathbb{R}^n\rightarrow \mathbb{R}$ as follows
$$g_l(x)=P_{l, \gamma}(x_{\gamma})\cdot Q_{\overline{\gamma}}(x_{\overline{\gamma}}),$$
where
$$P_l:=\prod_{i=1}^{n} \sin^2(n_l x_i),\ \  Q:=\prod_{j=1}^{n} x_{j},\ \ \  \gamma=(1,\cdot\cdot\cdot,k-1)\in I(k-1,n).$$
Then define $u_m : \mathbb{R}^n \rightarrow \mathbb{R}$ by
\begin{equation}\label{formulaor32}
u_m(x)=\chi(x) \sum_{i=1}^m \frac{1}{n_l^{2-\frac{2}{k}}l^{\frac{1}{k}}}g_l(x),
\end{equation}
where $\chi(x) \in C_c^{\infty}(\mathbb{R}^n)$ is a smooth cutoff function satisfying $\chi(x)=1$ for $x\in (0,2\pi)^n$.
In order to end the proof, some results are introduced as follows.

\begin{lemma}
Let $3\leq k\leq n$, $k<p<\infty$ and $u_m$  defined by (\ref{formulaor32}). Then
$$\sup_{m\in \mathbb{N}} \|u_m\|_{2-\frac{2}{k},p}<\infty.$$
\end{lemma}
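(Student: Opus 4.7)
The plan is to control $\|u_m\|_{2-\frac{2}{k},p}$ by direct estimates on the Gagliardo seminorm, exploiting the super-exponential separation of the scales $n_l = m^{k^{3l}}$; the hypothesis $p > k$ will enter precisely at the final summation step. Write $s = 2 - \tfrac{2}{k}$ and $c_l = (n_l^{s}\, l^{1/k})^{-1}$, so that $u_m = \sum_{l=1}^{m} c_l\,\chi g_l$. On the compact set $\mathrm{supp}(\chi)$ one has $\|g_l\|_{L^\infty} \le C$ and $\|D^\alpha g_l\|_{L^\infty} \le C\, n_l^{|\alpha|}$ for $|\alpha| \le 2$, so the $L^p$ and $W^{1,p}$ parts of the norm are bounded by a rapidly convergent series in $l$ and present no difficulty.

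The essential work is to estimate
$$J_m := \int_{\mathbb{R}^n}\!\int_{\mathbb{R}^n} \frac{|Du_m(x)-Du_m(y)|^p}{|x-y|^{n+(s-1)p}}\,dx\,dy.$$
I would partition the integration domain by the scale of $|x-y|$: for $1 \le l \le m$, set $A_l := \{(x,y) : n_l^{-1} \le |x-y| < n_{l-1}^{-1}\}$ (with the convention $n_0 = 1$), together with the extremal regions $A_0 := \{|x-y|\ge 1\}$ and $A_{m+1} := \{|x-y| < n_m^{-1}\}$. On $A_l$ each building block is classified by its frequency: for $l' \le l-1$ the mean value theorem gives $|D(\chi g_{l'})(x) - D(\chi g_{l'})(y)| \le C n_{l'}^2 |x-y|$, whereas for $l' \ge l$ the uniform bound $|D(\chi g_{l'})(x) - D(\chi g_{l'})(y)| \le C n_{l'}$ is used. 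The key numerical observation is that $c_{l'} n_{l'} = n_{l'}^{\,1-s}/l'^{\,1/k}$ decreases super-geometrically in $l'$ (since $1-s = \tfrac{2}{k} - 1 < 0$ for $k \ge 3$), while $c_{l'} n_{l'}^2 = n_{l'}^{\,2-s}/l'^{\,1/k}$ increases super-geometrically. Each weighted partial sum is therefore dominated by its endpoint term, yielding
$$|Du_m(x)-Du_m(y)| \le C\Bigl(\frac{n_l^{\,1-s}}{l^{1/k}} + \frac{n_{l-1}^{\,2-s}\,|x-y|}{(l-1)^{1/k}}\Bigr) \qquad \text{on } A_l.$$

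A polar-coordinate computation gives $\iint_{A_l} |x-y|^{-n-(s-1)p}\,dx\,dy \sim n_l^{(s-1)p}$ and $\iint_{A_l} |x-y|^{p-n-(s-1)p}\,dx\,dy \sim n_{l-1}^{-(2-s)p}$; combined with the exponent identities $(1-s)p + (s-1)p = 0$ and $(2-s)p - (2-s)p = 0$, each of the two summands produces a contribution of order $l^{-p/k}$ to $J_m$. The boundary regions are handled easily: $A_0$ via the global bound $\|Du_m\|_{L^\infty} \le C$ (obtained from the same super-geometric summation), and $A_{m+1}$ by the smooth-branch estimate $|Du_m(x)-Du_m(y)| \le C\|D^2u_m\|_{L^\infty}|x-y|$ together with $\|D^2 u_m\|_{L^\infty} \lesssim n_m^{2/k}/m^{1/k}$. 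Summing,
$$J_m \le C\sum_{l=1}^{m} \frac{1}{l^{p/k}} + O(1),$$
which is bounded uniformly in $m$ since $p > k$ makes the series convergent.

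The main obstacle is the partition argument just described: one must organize the scale decomposition so that the super-geometric monotonicity of the weighted sums $\sum c_{l'} n_{l'}$ and $\sum c_{l'} n_{l'}^2$ is genuinely exploited, rather than the naive triangle inequality which would produce only the divergent quantity $\sum_l l^{-1/k}$. The peculiar growth rate $n_l = m^{k^{3l}}$ is calibrated precisely so that both partial sums collapse to their endpoint terms with abundant margin, and so that the lower-order cross contributions from the cutoff $\chi$ and the polynomial factor $Q_{\overline\gamma}$ are negligible.
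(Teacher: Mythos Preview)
Your argument is correct, and it follows a genuinely different route from the paper.

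The paper reduces to the oscillatory factor $w_m=\sum_{l}(n_l^{2-2/k}l^{1/k})^{-1}\prod_{i<k}\sin^2(n_lx_i)$ and then invokes the Littlewood--Paley characterization of $B(2-\tfrac{2}{k},p)$ on the torus: because the frequencies $n_l=m^{k^{3l}}$ are super-lacunary, each projection $T_j$ captures at most one summand, and one obtains $\sum_j 2^{(2-2/k)jp}\|T_j w_m\|_{L^p}^p\le C\sum_l l^{-p/k}$, finite since $p>k$. The cutoff $\chi$ and the polynomial factor $Q_{\overline\gamma}$ are disposed of via a product estimate in Besov spaces, and the paper refers to the parallel computation in Baer--Jerison for the details.

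Your approach is the real-space analogue: you partition in $|x-y|$ at the thresholds $n_l^{-1}$ and, on each shell, split frequencies into ``slow'' ($l'<l$, handled by the Lipschitz bound) and ``fast'' ($l'\ge l$, handled by the $L^\infty$ bound on the gradient). The super-lacunarity makes both weighted sums collapse to their endpoint terms with uniform constants, and the polar-coordinate integrals produce exactly the cancellations $(1-s)p+(s-1)p=0$ and $(2-s)p-(2-s)p=0$ that leave $l^{-p/k}$. This is entirely self-contained, requires no Littlewood--Paley machinery or product estimates, and absorbs $\chi$ and $Q_{\overline\gamma}$ automatically since they only contribute lower powers of $n_l$ in the derivative bounds. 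The paper's route is shorter to write because it can cite standard theory; yours is more elementary and arguably more robust.
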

\begin{proof}
The proof is closely the same as the proof of boundedness (5.3) in \cite{BJ}. According to the standard estimates  for products in Besov space,  it suffices to estimate $\|w_m\|_{2-\frac{2}{k},p}$ on $[0,2\pi]^n$,  where
$$w_m=\sum_{l=1}^m \frac{1}{n_l^{2-\frac{2}{k}}l^{\frac{1}{k}}} P_{l,\gamma}=\sum_{l=1}^m \frac{1}{n_l^{2-\frac{2}{k}}l^{\frac{1}{k}}}  \prod_{i=1}^{k-1} \sin^2(n_l x_i).$$
The Littlewood-Paley characterization of the Besov space $B(2-\frac{2}{k},p)([0,2\pi]^n)$ (see, e.g. \cite{TH}) implies
\begin{equation}\label{formulaor317}
\|w_m\|_{2-\frac{2}{k},p} \leq C\left( \|w_m\|^p_{L^p([0,2\pi]^n)}+ \sum_{j=1}^{\infty} 2^{(2-\frac{2}{k})jp} \|T_j(w_m)\|^p_{L^p([0,2\pi]^n)}\right)^{\frac{1}{p}}.
\end{equation}
Here the operators $T_j:L^p\rightarrow L^p$ are defined by
$$T_j\left(\sum a_l e^{il\cdot x}\right)=\sum_{2^j\leq |l|\leq 2^{j+1}} \left( \rho(\frac{|l|}{2^{j+1}})- \rho(\frac{|l|}{2^{j}})\right)a_le^{il\cdot x},$$
 where $\rho\in C_c^{\infty}(\mathbb{R})$ is a suitably chosen bump function.

However, it is clear that $\|w_m\|_{L^p}$ is uniformly bounded because of the definition of $n_l$,
while an argument similar to the one used to prove the (5.3) in \cite{BJ} shows that
$$ \sum_{j=1}^{\infty} 2^{(2-\frac{2}{k})jp} \|T_j(w_m)\|^p_{L^p([0,2\pi]^n)}\leq C \sum_{l=1}^{\infty}\frac{1}{l^{\frac{p}{k}}},$$
where $C>0$ is a constant only depending on $k$. This gives the desired result.
\end{proof}

\begin{lemma}
Let $3\leq k\leq n$, $k<p<\infty$ and $u_m$  defined by (\ref{formulaor32}). And $\varphi\in C_c^{\infty}(\mathbb{R}^n)$ is defined by
$$\varphi(x)=\prod_{i=1}^n \varphi_i(x_i),~~~~x=(x_1,\ldots,x_n)\in \mathbb{R}^n,$$
where $\varphi_i\geq 0$ and $\mbox{spt}\varphi_i\subset (0, 2\pi)$  for $i=1,\ldots, n$. Then there exist $c>0$ and $K_0$ such that
$$\left|\int_{\mathbb{R}^n} F_k[ u_m] \varphi dx\right|\geq c(\log m)-K_0.$$
\end{lemma}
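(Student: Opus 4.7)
Since $\chi\equiv 1$ on $(0,2\pi)^n \supset \mathrm{spt}\,\varphi$, one has $u_m = \sum_{l=1}^{m} a_l g_l$ on $\mathrm{spt}\,\varphi$ with $a_l := n_l^{-(2-2/k)} l^{-1/k}$. By row-multilinearity of the determinant, each principal minor decomposes as
\begin{equation*}
M_\alpha^\alpha(D^2 u_m) = \sum_{L=(l_1,\ldots,l_k)\in\{1,\ldots,m\}^k} a_{l_1}\cdots a_{l_k}\, N_\alpha(L),
\end{equation*}
where $N_\alpha(L)$ is the $k\times k$ determinant whose $s$-th row is the $\alpha_s$-th row of $(D^2 g_{l_s})_\alpha^\alpha$. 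I would split this sum into the diagonal piece $L=(l,\ldots,l)$ and the off-diagonal remainder, and show that the diagonal produces the $c\log m$ growth while the off-diagonal part is uniformly bounded in $m$.

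For the diagonal contribution, the $l$-th term is $a_l^k M_\alpha^\alpha(D^2 g_l)$, and since $g_l = P_{l,\gamma}Q_{\bar\gamma}$ is exactly the tensor-product perturbation analyzed in Section 4 (with $n_l$ in place of $m$), the computation leading to the lower bound there applies verbatim via Lemma \ref{lemma21}: the dominant $\alpha\in I_{k-1}$ terms give $\int\sum_{\alpha\in I_{k-1}} M_\alpha^\alpha(D^2 g_l)\varphi\,dx \geq c_0 n_l^{2k-2}$ (with $c_0$ depending on $\varphi$ but not on $l$), while the subdominant $\alpha\in I_c$ terms with $c\leq k-2$ contribute at most $C n_l^{2k-4}$. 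Multiplying by $a_l^k = n_l^{-(2k-2)}l^{-1}$ and summing,
\begin{equation*}
\sum_{l=1}^{m} a_l^k \int \sum_\alpha M_\alpha^\alpha(D^2 g_l)\varphi\,dx \;\geq\; c_0\sum_{l=1}^{m}\frac{1}{l} - C\sum_{l=1}^{m}\frac{1}{l\, n_l^{2}} \;\geq\; c_0\log m - K_1.
\end{equation*}

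For the off-diagonal sum, the crucial point is that $n_l = m^{k^{3l}}$ makes the frequencies enormously separated. Given $L$ with $l^* := \max_s l_s$ strictly greater than some other $l_s$, the matrix defining $N_\alpha(L)$ has at least one row built from entries of $D^2 g_{l^*}$ (oscillating at frequency $n_{l^*}$) and another row from $D^2 g_{l'}$ with $l'<l^*$ (oscillating only at much lower frequencies, or not at all). Expanding the determinant and integrating by parts against the tensor-product test function $\varphi=\prod_i \varphi_i$ coordinate-by-coordinate, one transfers derivatives off the high-frequency entries onto $\varphi$ or the lower-frequency factors, gaining a factor $n_{l^*}^{-1}$ per step. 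After $N$ such steps, each off-diagonal term is dominated by $C_N\, n_{l^*}^{-N} \prod_s n_{l_s}^{2}$, and combining with $|a_{l_1}\cdots a_{l_k}| \leq \prod_s n_{l_s}^{-(2-2/k)}$ and the crude separation $n_{l^*} \geq n_{l_s}^{k^3}$ (valid for any $l_s<l^*$) shows that a fixed, sufficiently large $N$ makes each off-diagonal term absolutely summable. Summing over the $O(m^k)$ non-diagonal multi-indices $L$ and the finitely many $\alpha\in I(k,n)$ yields a uniform bound $K_2$, whence $|\int F_k[u_m]\varphi\,dx| \geq c_0\log m - (K_1+K_2)$.

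The main obstacle is precisely this off-diagonal estimate: one has to keep careful bookkeeping of how many rows oscillate at each frequency, which derivatives may be redistributed via integration by parts, and ensure that the polynomial-in-$m$ combinatorial count of multi-indices $L$ does not swamp the $n_{l^*}^{-N}$ gain. The super-exponential growth $n_l = m^{k^{3l}}$ is engineered precisely so that the separation of scales dominates both the combinatorial factors and the crude pointwise size $\prod_s n_{l_s}^{2}$ of the mixed minors, exactly in the spirit of the Baer--Jerison argument for the Hessian determinant recalled in the preceding lemma.
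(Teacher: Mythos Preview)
Your overall strategy—splitting each $M_\alpha^\alpha(D^2u_m)$ by row-multilinearity into diagonal multi-indices $L=(l,\ldots,l)$ and off-diagonal ones, with the diagonal producing $c_0\log m$ via the Section~4 computation—is correct and is also the backbone of the paper's proof. The diagonal part of your sketch is fine.

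The off-diagonal estimate, however, has a genuine gap. You assert that integration by parts ``transfers derivatives off the high-frequency entries \ldots gaining a factor $n_{l^*}^{-1}$ per step,'' but the entries of $D^2g_{l^*}$ are \emph{not} purely oscillatory at scale $n_{l^*}$: they carry undifferentiated factors $\sin^2(n_{l^*}x_i)=\tfrac12-\tfrac12\cos(2n_{l^*}x_i)$, whose constant part is untouched by integration by parts. Concretely, take $\alpha\in I_c$ with $1\le c\le k-2$ and an off-diagonal $L$ whose rows at the top frequency $n_{l^*}$ form a set $\beta_{\mathcal L}\subset\{1,\ldots,c\}$ of size $\rho\le c$. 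The $\rho\times\rho$ top-frequency block $M_{\beta_{\mathcal L}}^{\beta_{\mathcal L}}(H_{\mathcal L})$ has size $\sim n_{l^*}^{2\rho}$ and, a priori, a nonzero zero Fourier mode in the oscillatory variables; combined with $C(\mathcal L)\sim n_{l^*}^{-(2-2/k)\rho}$ and the low-frequency cofactor this contribution is \emph{not} controllable by size alone, and no coordinate carries a pure $e^{\pm 2in_{l^*}x_j}$ factor on which to integrate by parts.

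The paper closes exactly this gap in Steps~4--7 of its proof. After a Laplace expansion separating the top-frequency block $M_{\beta_{\mathcal L}}^{\xi}(H_{\mathcal L})$ from the low-frequency cofactor, it disposes of the easy cases ($\rho>c$, or $\xi$ meeting the polynomial block) by crude size bounds, and for the remaining configurations $(\mathcal L,\xi)\in S_\rho$ proves the key algebraic cancellation $c_{(0,\ldots,0)}=0$ (Step~6), resting ultimately on the identity $\int_{[0,2\pi]^\rho}\bigl(\prod_i\sin(n_{l^*}y_i)\bigr)^{2\rho-2}\bigl(1-2\sum_i\cos^2(n_{l^*}y_i)\bigr)\,dy=0$ from Baer--Jerison. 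Only after this zero-mode vanishing is established does integration by parts (Step~7) yield the required $n_{l^*}^{-2}$ gain. This cancellation is the crux of the off-diagonal bound and is entirely missing from your sketch; without it the argument cannot be completed as written.
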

\begin{proof}
It follows from our hypotheses on the cutoff function $\chi$ that
$$u_m(x)=\sum_{l=1}^m \frac{1}{n_l^{2-\frac{2}{k}}l^{\frac{1}{k}}}g_l(x),~~~~x\in (0,2\pi)^n.$$
Fix $c=0,1,\ldots, k-1$, and set
$$I_c:=\left\{\alpha\in I(k,n)\mid \alpha=(\alpha',\alpha''), \alpha' \subset \gamma,\alpha''\subset \overline{\gamma}, |\alpha'|=c\right\}.$$
Using the definition of $k$-Hessian we have
\begin{align*}\label{formulaor33}
\left|\int_{\mathbb{R}^n} F_k[ u_m] \varphi dx\right|&=\left| \sum _{\alpha\in I(k,n)} \int M_{\alpha}^{\alpha}( D^2 u_m)\varphi dx\right|\\
&\geq \MyRoman{1}-\MyRoman{2},
\end{align*}
where
\begin{equation}\label{formulaor34}
\MyRoman{1}:=\left|\sum_{\alpha\in I_{k-1}} \int M_{\alpha}^{\alpha}( D^2 u_m)\varphi dx\right|
\end{equation}
and
\begin{equation}\label{formulaor35}
\MyRoman{2}:=\sum_{s=0}^{k-2}\left| \sum_{\alpha\in I_s} \int M_{\alpha}^{\alpha}(D^2u_m)\varphi dx\right|.
\end{equation}
We shall divide the proof into seven steps.

Step 1: Estimate of \MyRoman{1}.

For any $\alpha \in I_{k-1}$, we can write $\alpha=\gamma+j=(1,2,\ldots,k-1,j)$ with $j\in (k,\ldots,n)$.
According to the multilinearity of the determinant, we have
$$M_{\alpha}^{\alpha}(D^2 u_m)=  (Q_{\overline{\gamma}-j})^k \sum_{\mathcal{L_{\alpha}}}C(\mathcal{L_{\alpha}}) \det (H(\mathcal{L_{\alpha}},j)),$$
where the sum is over $\mathcal{L_{\alpha}}=(l_1,\ldots,l_{k-1},l_j)\in \{1,\ldots,m\}^k$, we set
$$C(\mathcal{L_{\alpha}})= \prod_{i\in \alpha} \frac{1}{(n_{l_i})^{2-\frac{2}{k}} (l_i)^{\frac{1}{k}}},$$
and the $k\times k$ matrix $H(\mathcal{L_{\alpha}},j)$ is given by
$$H(\mathcal{L_{\alpha}},j)=\left( \partial_{s,t} (x_jP_{l_s,\gamma})\right)_{s,t\in \alpha}.$$
We denote $J_0$ the collection of all multi-indices $\mathcal{L_{\alpha}}=(l,l,\ldots, l)$ for $l\in \{1,2,\ldots,m\}$. Hence
\begin{align*}
\MyRoman{1}&=\left|\sum_{j\in \overline{\gamma}} \int M_{\gamma+j}^{\gamma+j}(D^2 u_m) \varphi dx\right|\\
&=\left|\sum_{j\in \overline{\gamma}} \int (Q_{\overline{\gamma}-j})^k \sum_{\mathcal{L_{\alpha}}}  C(\mathcal{L_{\alpha}}) \det (H(\mathcal{L_{\alpha}},j)) \varphi dx\right|\\
&\geq \MyRoman{3}- \MyRoman{4},
\end{align*}
where
\begin{equation}\label{formulaor36}
\MyRoman{3}:= \left|\sum_{j\in \overline{\gamma}} \int (Q_{\overline{\gamma}-j})^k \sum_{\mathcal{L_{\alpha}}\in J_0}  C(\mathcal{L_{\alpha}}) \det (H(\mathcal{L},j)) \varphi dx\right|,
\end{equation}
and
\begin{equation}\label{formulaor37}
\MyRoman{4}:= \left|\sum_{j\in \overline{\gamma}} \int (Q_{\overline{\gamma}-j})^k \sum_{\mathcal{L_{\alpha}}\notin J_0}  C(\mathcal{L_{\alpha}}) \det (H(\mathcal{L},j)) \varphi dx\right|.
\end{equation}
Since
$$\det (H(\mathcal{L_{\alpha}},j))= (-1)^{k-1} 2^k n_l^{2(k-1)} (\prod_{i\in \gamma} \sin^2 (n_l x_i))^{k-1} (x_j)^{k-2} (\sum_{i\in \gamma} \cos^2(n_lx_i))$$
for any $\mathcal{L_{\alpha}}=(l,l,\cdot\cdot\cdot,l)\in J_0$,  then we have
\begin{align*}
\MyRoman{3}&= \left|\sum_{j\in \overline{\gamma}} \sum_{l=1}^m (-1)^{k-1} 2^k \frac{1}{l} \int (Q_{\overline{\gamma}-j})^k (\prod_{i\in \gamma} \sin^2 (n_l x_i))^{k-1} (x_j)^{k-2} (\sum_{i\in \gamma} \cos^2(n_l x_i)) \varphi(x) dx\right|\\
&=2^k \left|\sum_{l=1}^m  \frac{1}{l} \sum_{j\in \overline{\gamma}} \int_{(0,2\pi)^n} (Q_{\overline{\gamma}-j})^k  (\prod_{i\in \gamma} \sin^2 (n_l x_i))^{k-1} (x_j)^{k-2} (\sum_{i\in \gamma} \cos^2(n_l x_i))  \varphi(x) dx\right|\\
&\geq C_1 \log m,
\end{align*}
where  $C_1$ is a positive constant independent of $m$. Note that
\begin{align*}
\MyRoman{4}&\leq \sum_{j\in \overline{\gamma}}  \left| \sum_{\mathcal{L}_{\alpha}\notin J_0} C(\mathcal{L_{\alpha}}) \int (Q_{\overline{\gamma}-j})^k \det (H(\mathcal{L}_{\alpha},j))\varphi dx\right|\\
&\leq\sum_{j\in \overline{\gamma}}  \left| \sum_{\mathcal{L}_{\alpha}\notin J_0} C(\mathcal{L_{\alpha}}) \int_{\mathbb{R}^k} \det (H(\mathcal{L}_{\alpha},j))\prod_{i\in \alpha}\varphi_i(x_i) dx_{\alpha} \right|
\cdot \left|\int_{\mathbb{R}^{n-k}} (Q_{\overline{\gamma}-j})^k  \prod_{i\in \overline{\alpha}}\varphi_i(x_i) dx_{\overline{\alpha}}\right|\\
&\leq C \|\varphi_{\overline{\alpha}}\|_{L^{\infty}} \sum_{j\in \overline{\gamma}} \sum_{\mathcal{L}_{\alpha}\notin J_0} C(\mathcal{L}_{\alpha})\left|\int_{\mathbb{R}^k} \det (H(\mathcal{L}_{\alpha},j))\prod_{i\in \alpha}\varphi_i(x_i) dx_{\alpha}\right|.
\end{align*}
Then  Proposition 5.2 in \cite{BJ} implies that there exists a constant $C>0$ such that
$$\MyRoman{4}\leq C \sum_{\mathcal{L}_{\alpha}\notin J_0} \frac{\|\varphi\|_{C^2}}{m^{2k}}\leq C\|\varphi\|_{C^2}.$$
Hence
$$\MyRoman{1}\geq C_1 \log m- C \|\varphi\|_{C^2}. $$

 Step 2:  Estimate of $\left| \sum_{\alpha\in I_0} \int M_{\alpha}^{\alpha}(D^2u_m)\varphi dx\right|$.

 Without loss of generality we can assume that $I_0\neq \emptyset$. Then for any $\alpha \in I(k,n)\cap I_0$ we have
 $$M_{\alpha}^{\alpha} ( D^2 u_m)= \sum_{\mathcal{L}_{\alpha}} C(\mathcal{L}_{\alpha}) \prod_{i\in \gamma} \sin^2(n_{l_i} x_i)(Q_{\overline{\gamma}-\alpha})^k \det(G_{\alpha}),$$
 where the $k\times k$ matrix $G_{\alpha}$ is  given by
 $$G_{\alpha}=\left(\partial_{st} \left(\prod_{i\in \alpha} x_i\right)\right)_{s,t\in \alpha}.$$
 Therefore
 \begin{align*}
\left| \sum_{\alpha\in I_0} \int M_{\alpha}^{\alpha}(D^2u_m)\varphi dx\right|&\leq \|\varphi\|_{L^{\infty}} \sum_{\alpha\in I_0} \sum_{\mathcal{L}_{\alpha}} C(\mathcal{L}_{\alpha}) \int_{(0,2\pi)^n}\prod_{i\in \gamma} \sin^2(n_{l_i} x_i)(Q_{\overline{\gamma}-\alpha})^k |\det(G_{\alpha})|dx\\
&\leq C \|\varphi\|_{L^{\infty}}.
\end{align*}

 Step 3: The first estimate of $\left| \sum_{\alpha\in I_c} \int M_{\alpha}^{\alpha}(D^2u_m)\varphi dx\right|$ for $c=1,\ldots,k-2$.

 For any $\alpha \in I_c$, it can be written as $\alpha=(\alpha',\alpha^{''})$ with $\alpha' \subset \gamma$ and $\alpha^{''}\subset \overline{\gamma}$. Set
 $$y_1=x_{\alpha'_1},~y_2=x_{\alpha'_2},~\cdot\cdot\cdot,~y_c=x_{\alpha'_c},~y_{c+1}=x_{^{\alpha''_1}},
 ~y_{c+2}=x_{^{\alpha''_2}},\cdot\cdot\cdot,~y_k=x_{^{\alpha''_{k-c}}}.$$
 Hence
 \begin{equation}\label{formulaor38}
u_{m}= \sum_{l=1}^m \frac{1}{n_l^{2-\frac{2}{k}}l^{\frac{1}{k}}} \left(\prod_{s=1}^c \sin^2(n_ly_s) \prod_{s=1}^{k-c}y_{c+s}\right) \prod_{i\in\gamma-\alpha'} \sin^2(n_l x_i) \prod_{i\in \overline{\gamma}-\alpha^{''}} x_i,
\end{equation}
which implies
\begin{align*}
&\left|  \int M_{\alpha}^{\alpha}(D^2u_m)\varphi dx\right| \\
&=\left|  \int \left(\prod_{i\in\gamma-\alpha'} \sin^2(n_l x_i) \prod_{i\in \overline{\gamma}-\alpha^{''}} x_i\right)^k \det\left(D^2\left(\sum_{l=1}^m \frac{1}{n_l^{2-\frac{2}{k}}l^{\frac{1}{k}}} \left(\prod_{s=1}^c \sin^2(n_ly_s) \prod_{s=1}^{k-c}y_{c+s}\right)\right)\right)\varphi dx_{\overline{\alpha}}dy\right|\\
&\leq  \left| \int_{(0,2\pi)^{n-k}} \left(\prod_{i\in\gamma-\alpha'} \sin^2(n_l x_i) \prod_{i\in \overline{\gamma}-\alpha^{''}} x_i\right)^k  \prod_{i\in \overline{\alpha}}\varphi_i(x_i)dx_{\overline{\alpha}}\right|\\
&\cdot \left| \int_{\mathbb{R}^k}  \det\left(D^2\left(\sum_{l=1}^m \frac{1}{n_l^{2-\frac{2}{k}}l^{\frac{1}{k}}} \left(\prod_{s=1}^c \sin^2(n_ly_s) \prod_{s=1}^{k-c}y_{c+s}\right)\right)\right) \prod_{i\in \alpha} \varphi_i(x_i)dy\right|\\
&\leq C\|\varphi\|_{L^{\infty}}  \left| \int_{\mathbb{R}^k}  \det\left(D^2\left(\sum_{l=1}^m \frac{1}{n_l^{2-\frac{2}{k}}l^{\frac{1}{k}}} \left(\prod_{s=1}^c \sin^2(n_ly_s) \prod_{s=1}^{k-c}y_{c+s}\right)\right)\right) \prod_{i\in \alpha} \varphi_i(x_i)dy\right|,
\end{align*}
where $y=(y_1,\ldots,y_k)$. So it is convenient to set
\begin{equation}\label{formulaor39}
v_m(y)=\sum_{l=1}^m \frac{1}{n_l^{2-\frac{2}{k}}l^{\frac{1}{k}}} \prod_{s=1}^c \sin^2(n_ly_s) \prod_{s=1}^{n-c}y_{c+s},~~\psi(y)=\prod_{i\in \alpha} \varphi_i(x_i).
\end{equation}
In order to estimate  $\left| \sum_{\alpha\in I_c} \int M_{\alpha}^{\alpha}(D^2u_m)\varphi dx\right|$, it is sufficient to show that
\begin{equation}\label{formulaor310}
\MyRoman{5}:= \left| \int_{\mathbb{R}^k}  \det\left(D^2 v_m\right) \psi dy\right| \leq C\|\psi\|_{C^2},
\end{equation}
where $C>0$ is a  constant.

Step 4: The first estimate for $\MyRoman{5}$.

 Similarly,  we define $P_l:\mathbb{R}^c \rightarrow \mathbb{R}$ and $Q:\mathbb{R}^{k-c} \rightarrow \mathbb{R}$ by
$$P_l:=\prod_{i=1}^{c} \sin^2(n_l y_i),~~ Q:=\prod_{i=i}^{k-c} y_{c+i}.$$
Similar to Step 2, we have
$$\MyRoman{5}=\left|\sum_{\mathcal{L}} C(\mathcal{L}) \int det(H_{\mathcal{L}}) \psi dy\right|,$$
where $\mathcal{L}=(l_1,\ldots,l_k)\in \{1,2,\ldots,m\}^k$,
$$C(\mathcal{L})=\prod_{i=1}^k \frac{1}{n_{l_i}^{2-\frac{2}{k}} (l_i)^{\frac{1}{k}}},$$
and  the $k\times k$ matrix $H_{\mathcal{L}}=H_{\mathcal{L}}(y)$ is given by
$$\left( \partial_{ij} (P_{l_i}Q)\right)_{i,j \in (1,2,\cdot\cdot\cdot,k)}.$$
Fixing $\mathcal{L}=(l_1,\ldots,l_k)$, denote
$$l_{\ast}:=\max\{l_i\mid i=1,\ldots,k\},$$
and define
$$\beta_{\mathcal{L}}:=\{i:l_i=l_{\ast}\}.$$
Using Laplace formulas of the determinant we obtain
\begin{align*}
\MyRoman{5}&=\left|\sum_{\rho=1}^k \sum_{|\beta_{\mathcal{L}}|=\rho} C(\mathcal{L}) \int det(H_{\mathcal{L}}) \psi dy\right|\\
&=\left| \sum_{\rho=1}^k \sum_{|\beta_{\mathcal{L}}|=\rho}  C(\mathcal{L}) \sum_{\xi\in I(\rho,k)} \sigma(\beta_{\mathcal{L}}, \overline{ \beta_{ \mathcal{L} } }) \sigma (\xi,\overline{\xi})\int M_{\beta_{\mathcal{L}}}^{\xi}(H_{\mathcal{L}}) M_{\overline{\beta_{\mathcal{L}}}}^{\overline{\xi}}  (H_{\mathcal{L}})
 \psi dy\right|\\
 &\leq \MyRoman{6}+\MyRoman{7},
\end{align*}
where
\begin{equation}\label{formulaor310}
\MyRoman{6}:= \sum_{\rho=1}^c \sum_{|\beta_{\mathcal{L}}|=\rho}  \sum_{\xi\in I(\rho,k)} \left|C(\mathcal{L})\int M_{\beta_{\mathcal{L}}}^{\xi}(H_{\mathcal{L}}) M_{\overline{\beta_{\mathcal{L}}}}^{\overline{\xi}}  (H_{\mathcal{L}})
 \psi dy\right|,
\end{equation}
and
\begin{equation}\label{formulaor311}
\MyRoman{7}:= \sum_{\rho=c+1}^k \sum_{|\beta_{\mathcal{L}}|=\rho}  \sum_{\xi\in I(\rho,k)} \left|C(\mathcal{L})\int M_{\beta_{\mathcal{L}}}^{\xi}(H_{\mathcal{L}}) M_{\overline{\beta_{\mathcal{L}}}}^{\overline{\xi}}  (H_{\mathcal{L}})
 \psi dy\right|.
\end{equation}
Note that we separated the determinant $\det(H_{\mathcal{L}})$ into two parts: $M_{\beta_{\mathcal{L}}}^{\xi}(H_{\mathcal{L}})$ involves only frequencies of the highest order $n_{l_{\ast}}$ or $0$, while $M_{\overline{\beta_{\mathcal{L}}}}^{\overline{\xi}}  (H_{\mathcal{L}})$ involves only frequencies of lower order $n_{l_i}$ with $l_i\leq n_{l_{\ast}-1}$.

If $\rho>c$, for any  $\mathcal{L}$ with $|\beta_{\mathcal{L}}|=\rho$ and $\xi \in I(\rho,k)$,   we set $|\beta_{\mathcal{L}}\cap(1,2,\ldots,c)|=b$ and $|\xi\cap (1,2,\ldots,c)|=b'$. There is no loss of generality in assuming $\beta_{\mathcal{L}}=(1,2,\ldots,b,\beta_{b+1},\ldots,\beta_{\rho})$, $\xi=(1,2,\ldots,b',\beta_{b'+1},\ldots,\beta_{\rho})$. Then
$$(H_{\mathcal{L}})_{\beta_{\mathcal{L}}}^{\xi}=\left(
  \begin{array}{cc}
    n_{l_{\ast}}^2 g_{1,1},  \cdots,  n_{l_{\ast}}^2 g_{1,b'}, n_{l_{\ast}} g_{1,b'+1},\cdots, n_{l_{\ast}} g_{1,a}\\
    \cdot\cdot\cdot  \\
    n_{l_{\ast}}^2 g_{b,1}, \cdots,  n_{l_{\ast}}^2 g_{b,b'}, n_{l_{\ast}} g_{b,b'+1},\cdots, n_{l_{\ast}} g_{b,a}\\
    n_{l_{\ast}} g_{b+1,1}, \cdots,  n_{l_{\ast}} g_{b+1,b'},  g_{b+1,b'+1},\cdots,  g_{b+1,a}\\
    \cdot\cdot\cdot\\
    n_{l_{\ast}} g_{a,1}, \cdots,  n_{l_{\ast}} g_{a,b'},  g_{a,b'+1},\cdots,  g_{a,a}\\
  \end{array}
\right)$$
where $g_{s,t}$ is a uniformly bounded function for  $s\in \beta_{\mathcal{L}}$, $t\in \xi$. It follows that
$$|M_{\beta_{\mathcal{L}}}^{\xi}(H_{\mathcal{L}}) |\leq C n_{l_{\ast}}^{b+b'}\leq C n_{l_{\ast}}^{2c}.$$
The following result may be proved in much the same way as above:
$$|M_{\overline{\beta_{\mathcal{L}}}}^{\overline{\xi}}  (H_{\mathcal{L}})| \leq C n_{l_{\ast}-1}^{2(k-\rho)}.$$
Hence
\begin{align*}
\MyRoman{7} &\leq C \sum_{\rho=c+1}^k \sum_{|\beta_{\mathcal{L}}|=\rho}  \sum_{\xi\in I(\rho,k)}   \frac{1}{n_{l_{\ast}}^{(2-\frac{2}{k})\rho}} \int_{(0,2\pi)^k} |M_{\beta_{\mathcal{L}}}^{\xi}(H_{\mathcal{L}}) | |M_{\overline{\beta_{\mathcal{L}}}}^{\overline{\xi}}  (H_{\mathcal{L}})| |\psi| dy \\
&\leq C \|\psi\|_{L^{\infty}} \sum_{\rho=c+1}^k \frac{n_{l_{\ast}-1}^{2(k-\rho)}}{n_{l_{\ast}}^{2\rho-\frac{2\rho}{k}-2c}}\\
&\leq C \|\psi\|_{L^{\infty}}  \sum_{\rho=c+1}^{k-1} \frac{n_{l_{\ast}-1}^{2k}}{n_{l_{\ast}}^{\frac{2}{k}}} \\
&\leq C \frac{\|\psi\|_{L^{\infty}}}{m^{2k}}.
\end{align*}
Obviously we shall have established the theorem if we could estimate $\MyRoman{6}$.

Step 5: Fix $\mathcal{L}$ such that $|\beta_{\mathcal{L}}|=\rho\leq c$, and we will prove that
\begin{equation}\label{formulaor312}
M_{\beta_{\mathcal{L}}}^{\xi}(H_{\mathcal{L}})=0
\end{equation}
for any $\xi \in I(\rho,k)$ with $|\beta_{\mathcal{L}}\cap\xi|\leq \rho-2$.

Let $i_1,i_2 \in \beta_{\mathcal{L}} \backslash\xi$ be given with $i_1\neq i_2$, and set
$$h_i(y_i):=\begin{cases}
\sin^2(n_{l_{\ast}} y_i), &i\in 1,\ldots,c \\
y_i,  &i\in c+1,\ldots,k
\end{cases}$$
and $H(y)=\prod_{i=1}^kh_i(y_i)$,
$$v_k(y)=\left(\partial_{i_k,j} H\right)_{j\in \xi}\in \mathbb{R}^\rho,~~~~k=1,2.$$
Since $i_1,i_2 \notin \xi$, we have
$$\begin{cases}
v_1=\left(h'_{i_1} h'_{j} H_{\overline{i_1+j}}\right)_{j\in \xi},\\
v_2=\left(h'_{i_2} h'_{j} H_{\overline{i_2+j}}\right)_{j\in \xi},
\end{cases}
$$
which immediately give (\ref{formulaor312}).

Let $\rho\leq c$, $\beta_{\mathcal{L}}$ and $\xi$ be given. If either
\begin{enumerate}
\item[(\romannumeral1)]  $|\beta_{\mathcal{L}}\cap \xi|=\rho$ such that $\beta_{\mathcal{L}} \cap (c+1,c+2,\ldots,k)\neq\emptyset$, or
\item [(\romannumeral2)]  $|\beta_{\mathcal{L}}\cap \xi|=\rho-1$ such that $i^{\ast}:=\beta_{\mathcal{L}}\backslash \xi$, $j^{\ast}:= \xi \backslash\beta_{\mathcal{L}} \in (c+1,\ldots,k)$,
\end{enumerate}
is satisfied, by the same method as in Step 4 and (5.24) in \cite{BJ}, it follows that
$$\left|C_{\mathcal{L}}\int M_{\beta_{\mathcal{L}}}^{\xi}(H_{\mathcal{L}}) M_{\overline{\beta_{\mathcal{L}}}}^{\overline{\xi}}  (H_{\mathcal{L}})
 \psi dy\right| \leq C \frac{\|\psi\|_{L^{\infty}}}{m^{2k}}.$$
Set
\begin{align*}
S_{\rho}:&=\{(\mathcal{L},\xi)\mid \beta_{\mathcal{L}},\xi\in I(\rho,k), |\beta_{\mathcal{L}}\cap\xi|=\rho, \beta_{\mathcal{L}} \cap (c+1,\cdot\cdot\cdot,k)=\emptyset\}\\
&\cup\{(\mathcal{L},\xi)\mid \beta_{\mathcal{L}},\xi\in I(\rho,k), |\beta_{\mathcal{L}}\cap\xi|=\rho-1, \beta_{\mathcal{L}} \backslash\xi \notin (c+1,\cdot\cdot\cdot,k)\}\\
 &\cup\{(\mathcal{L},\xi)\mid \beta_{\mathcal{L}},\xi\in I(\rho,k), |\beta_{\mathcal{L}}\cap\xi|=\rho-1,\xi \backslash \beta_{\mathcal{L}} \notin (c+1,\cdot\cdot\cdot,k)\},
\end{align*}
then
\begin{equation}\label{formulaor313}
\MyRoman{6}\leq  C \frac{\|\psi\|_{L^{\infty}}}{m^{2k}}+\sum_{\rho=1}^c \sum_{(\mathcal{L},\xi)\in S_{\rho}}  \left|C(\mathcal{L})\int M_{\beta_{\mathcal{L}}}^{\xi}(H_{\mathcal{L}}) M_{\overline{\beta_{\mathcal{L}}}}^{\overline{\xi}}  (H_{\mathcal{L}})
 \psi dy\right|.
\end{equation}
It is easy to see that for any $\xi\in I(\rho,k)$ there exist integers $b_{c+1},b_{c+2},\ldots,b_k\leq \rho$ and a sequence of coefficients $\{c_z\}\subset \mathbb{C}$ such that
\begin{equation}\label{formulaor314}
M_{\beta_{\mathcal{L}}}^{\xi}(H_{\mathcal{L}})=\sum_{z\in \Lambda} c_z e^{2n_{l_{\ast}}iz\cdot\widehat{y}}y_{c+1}^{b_{c+1}}y_{c+2}^{b_{c+2}}\cdot\cdot\cdot y_k^{b_k},
\end{equation}
where $\widehat{y}=(y_1,y_2,\cdot\cdot\cdot,y_c)$,
$$\Lambda=\{z\in \mathbb{Z}^c\mid |z_i|\leq c\},$$
and
$$|c_z|\leq Cn_{l_{\ast}}^{2\rho}.$$
In fact the proof of this statement follows in a similar manner in \cite[Remark 5.5]{BJ}.

Step 6: Next we have to show that $c_{(0,\ldots,0)}=0$ for any $(\mathcal{L},\xi) \in S_{\rho}$ where $c_{(0,\ldots,0)}$ is defined in (\ref{formulaor314}).

According to (\ref{formulaor314}), it suffices to show that
\begin{equation}\label{formulaor315}
\int_{[0,2\pi]^c} M_{\beta_{\mathcal{L}}}^{\xi}(H_{\mathcal{L}}) d\widehat{y}=0
\end{equation}
for  each $(\mathcal{L},\xi)\in S_{\rho}$.
Suppose that $\beta_{\mathcal{L}}=\xi$ and  $\beta_{\mathcal{L}} \cap (c+1,\ldots,k)=\emptyset$, and set $\eta:=(1,\cdot\cdot\cdot,c)$. Then
\begin{align*}
\int_{[0,2\pi]^c} M_{\beta_{\mathcal{L}}}^{\xi}(H_{\mathcal{L}}) d\widehat{y}&=\int_{[0,2\pi]^c} (P_{l_{\ast},\eta-\xi}Q)^{\rho} \det \left(D^2 \left(\prod_{i\in\xi} \sin^2(n_{l_{\ast}} y_i)\right)\right) d\widehat{y}\\
&=Q^{\rho}\int_{[0,2\pi]^{c-\rho}} (P_{l_{\ast},\eta-\xi})^{\rho} dy_{\eta-\xi} \\
&\cdot (-2n^2_{l_{\ast}})^{\rho}\int_{[0,2\pi]^{\rho}} \left(\prod_{i\in\xi} \sin(n_{l_{\ast}} y_i)\right)^{2\rho-2} \left(1-2\sum_{i\in\xi} \cos^2(n_{l_{\ast}} y_i)\right) dy_{\xi}.
\end{align*}
The equality (\ref{formulaor315}) holds as desired due to the equality (5.34) in \cite{BJ}.

We now turn to the second case, suppose that $|\beta_{\mathcal{L}}\cap\xi|=\rho-1$ and $j^{\ast}= \xi \backslash \beta_{\mathcal{L}}\in (1,2,\ldots,c)$.
Using the Laplace formulas of determinant again we obtain
\begin{align*}
M_{\beta_{\mathcal{L}}}^{\xi}(H_{\mathcal{L}})&= \sum_{i\in \beta_{\mathcal{L}}} \sigma(i,\beta_{\mathcal{L}}-i) \sigma(j^{\ast},\xi-j^{\ast}) \partial_{i,j^{\ast}} (P_{l_{\ast}}Q) M_{\beta_{\mathcal{L}}-i}^{\xi-j^{\ast}}(H_{\mathcal{L}})\\
&=n_{l_{\ast}} \sin(2n_{l_{\ast}}y_{j^{\ast}}) \sin^{2\rho-2}(n_{l_{\ast}}y_{j^{\ast}}) g(y),
\end{align*}
where the function $g: \mathbb{R}^k \rightarrow \mathbb{R}$ is independent of the variable $y_{j^{\ast}}$.
It follows that $M_{\beta_{\mathcal{L}}}^{\xi}(H_{\mathcal{L}})$ is an odd function in the variable $y_{j^{\ast}}$,
so the equality (\ref{formulaor315}) is obtained.

The proof of the last case for this statement follows in a similar manner which implies  $c_{(0,\ldots,0)}=0$ for any $(\mathcal{L},\xi) \in S_{\rho}$.

Step 7: Finally we have to estimate the second part  on the right-hand side of (\ref{formulaor313}) by integration by parts.

For any $(\mathcal{L},\xi) \in S_{\rho}$ we have $c_{(0,\ldots,0)}=0$, and then
\begin{align*}
\MyRoman{8}&:=\sum_{\rho=1}^c \sum_{(\mathcal{L},\xi)\in S_{\rho}}  \left|C(\mathcal{L})\int M_{\beta_{\mathcal{L}}}^{\xi}(H_{\mathcal{L}}) M_{\overline{\beta_{\mathcal{L}}}}^{\overline{\xi}}  (H_{\mathcal{L}}) \psi dy\right|\\
 &\leq C \sum_{\rho=1}^c (n_{l_{\ast}})^{ \frac{2\rho}{k}}\left( \sup_{(\mathcal{L},\xi)\in S, z\in \Lambda \backslash \{0\}}\left| \int e^{2n_{l_{\ast}}iz\cdot\widehat{y}}y_{c+1}^{b_{c+1}}y_{c+2}^{b_{c+2}}\cdots y_k^{b_k} M_{\overline{\beta_{\mathcal{L}}}}^{\overline{\xi}}  (H_{\mathcal{L}})
 \psi dy\right|\right)
\end{align*}
where $C>0$ is a constant. Let $z=(z_1,\ldots,z_c)\in \Lambda \backslash \{0\}$ be given, there exists $j\in (1,\ldots,c)$ such that $z_j\neq 0$. Using the integration by parts two times in the $y_j$ variable, we obtain
\begin{equation}\label{formulaor316}
\int e^{2n_{l_{\ast}}iz\cdot\widehat{y}}y_{c+1}^{b_{c+1}}y_{c+2}^{b_{c+2}}\cdots y_k^{b_k} M_{\overline{\beta_{\mathcal{L}}}}^{\overline{\xi}}  (H_{\mathcal{L}})
 \psi dy=-\frac{1}{4(n_{l_{\ast}})^2 (z_j)^2} \int e^{2n_{l_{\ast}}iz\cdot\widehat{y}}y_{c+1}^{b_{c+1}}y_{c+2}^{b_{c+2}}\cdots y_k^{b_k} \partial _j^2\left(M_{\overline{\beta_{\mathcal{L}}}}^{\overline{\xi}}  (H_{\mathcal{L}})
 \psi \right) dy.
\end{equation}
In fact
$$\left|\partial^2_j M_{\overline{\beta_{\mathcal{L}}}}^{\overline{\xi}}  (H_{\mathcal{L}}) \right|\leq C (n_{l_{\ast}-1})^{2(k-\rho)+2}\leq  C (n_{l_{\ast}-1})^{2k}.$$
It follows that
\begin{equation}
\MyRoman{8} \leq C \|\psi\|_{C^2} \sum_{\rho=1}^c (n_{l_{\ast}})^{\frac{2\rho}{k}} \frac{(n_{l_{\ast}-1})^{2k}}{(n_{l_{\ast}})^2}
=C \|\psi\|_{C^2} \sum_{\rho=1}^c \frac{(n_{l_{\ast}-1})^{2k}}{(n_{l_{\ast}})^{2-\frac{2\rho}{k}}}
\leq C \frac{\|\psi\|_{C^2}}{m^{2k}}.
\end{equation}
Therefore we establish the estimate of \MyRoman{1}-\MyRoman{8}, which gives the desired result.
\end{proof}

\begin{proof}[\bf The proof of Theorem \ref{thm51}.]
Let
$$\widetilde{u}_m:=\frac{u_m}{(\log m)^{\frac{1}{2k}}},$$
where $u_m$ is defined in (\ref{formulaor32}). Then by Lemma 5.2 and 5.3, Theorem \ref{thm51} is established and hence Theorem 1.2 is completely proved.
\end{proof}

%---------------------------------------------------------------------------------------%

\section*{Acknowledgments}
\addcontentsline{toc}{chapter}{Acknowledgements}
This work is supported by NSF grant of China ( No.11131005, No.11301400) and the Fundamental Research Funds for the Central Universities(Grant No. 2014201020203).

\bibliographystyle{plain}

\end{document}